 \newtheorem{thm}{Theorem}[section]
 \newtheorem{corollary}[thm]{Corollary}
 \newtheorem{lemma}[thm]{Lemma}
 \newtheorem{Proposition}[thm]{Proposition}
 \theoremstyle{definition}
 \theoremstyle{remark}
 \newtheorem{example}{Example}
 \numberwithin{equation}{section}
 \newcommand{\R}{\mathbb{R}}
    \renewcommand{\H}{\mathcal{H}}
    \newcommand{\M}{\mathcal{M}}
\newcommand{\E}{\mathcal{E}}
\begin{document}

%-------------------------------------------------------------------------
% editorial commands: to be inserted by the editorial office
%
%---------------------------------------------------------------------------
%Insert here the title, affiliations and abstract:
%

\title[]
{ \LARGE Envelopes of Bisection Lines of Polygons}

%----------Author 1
\author[J.Marques]{Joel Albertacci Marques da Silva}

\address{%
Departamento de Matem\'{a}tica- PUC-Rio\br
Rio de Janeiro, RJ, Brasil}
\email{joel.marquesdx@gmail.com}

%----------Author 2
\author[M.Craizer]{Marcos Craizer}

\address{%
Departamento de Matem\'{a}tica- PUC-Rio\br
Rio de Janeiro, RJ, Brasil}
\email{craizer@puc-rio.br}

\thanks{This paper is part of the Master`s  dissertation of the first author under the guidance of the second author. Both authors want to thank Faperj, CNPq and CAPES (Finance Code 001) for financial support during the preparation of this manuscript. }
%----------classification, keywords, date

\subjclass{53A15; 52A10}

\keywords{Envelopes of families of lines, Area bisection lines, Discrete Envelopes, Discrete Cusps, Half-area Polygons, Three Vertices Theorem.}

\date{June 26, 2024}
%----------additions
%%% ----------------------------------------------------------------------

\begin{abstract}
A bisection line divides a convex planar curve into two parts with equal areas. It is natural to study the envelope of these lines, which in general present singularities. The polygonal case is particularly inte\-resting, since there are several different notions of a discrete envelope. In this paper, we study three different notions of discrete envelopes of bisection lines and the connections between them.
\end{abstract}

%%% ----------------------------------------------------------------------
\maketitle
%%% ----------------------------------------------------------------------
%\tableofcontents

\section{Introduction}

We call {\it bisecting lines} the lines dividing a polygon into two parts with equal area, and denote by $\H$ the envelope of these bisecting lines.
In the case of a triangle, $\H$ is the concatenation of three arcs of hyperbolas (Figure \ref{fig:Triangle}, left, in red). This picture appears in some references (\cite{Ball},\cite{Dunn}). For a regular pentagon, $\H$ is the concatenation of five hyperbolic arcs (Figure \ref{fig:Triangle}, right, in red). For a general polygon, the envelope of bisecting lines is a concatenation of hyperbolic arcs, each of them asymptotic to some pair of sides. In the present paper, we study the structure of these envelopes.

\begin{figure}[htb]
\centering
\subfigure{
\includegraphics[width=.45\textwidth]{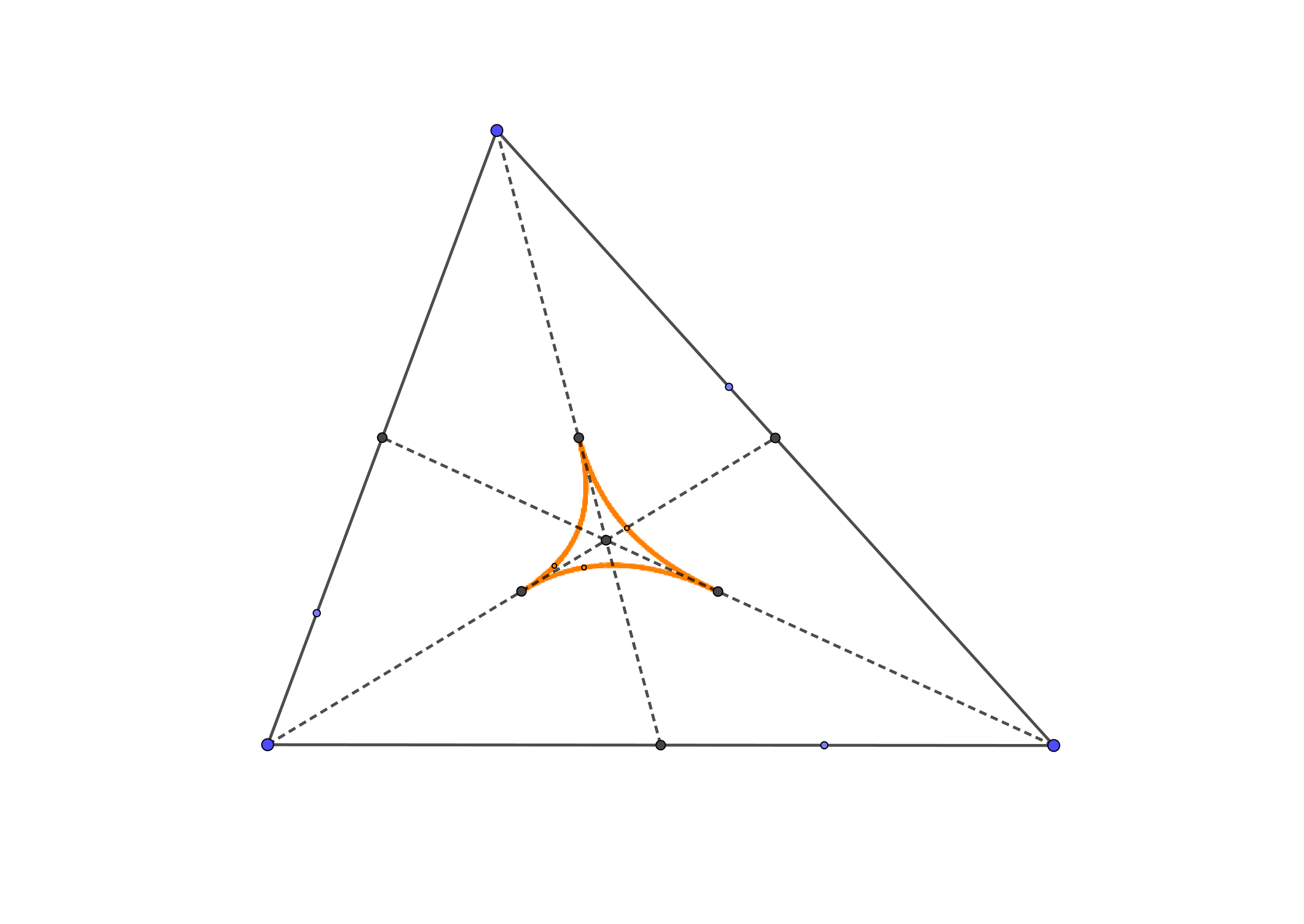}  }
%\subcaption{a. Support function of .} }
\subfigure{
\includegraphics[width=.45\textwidth]{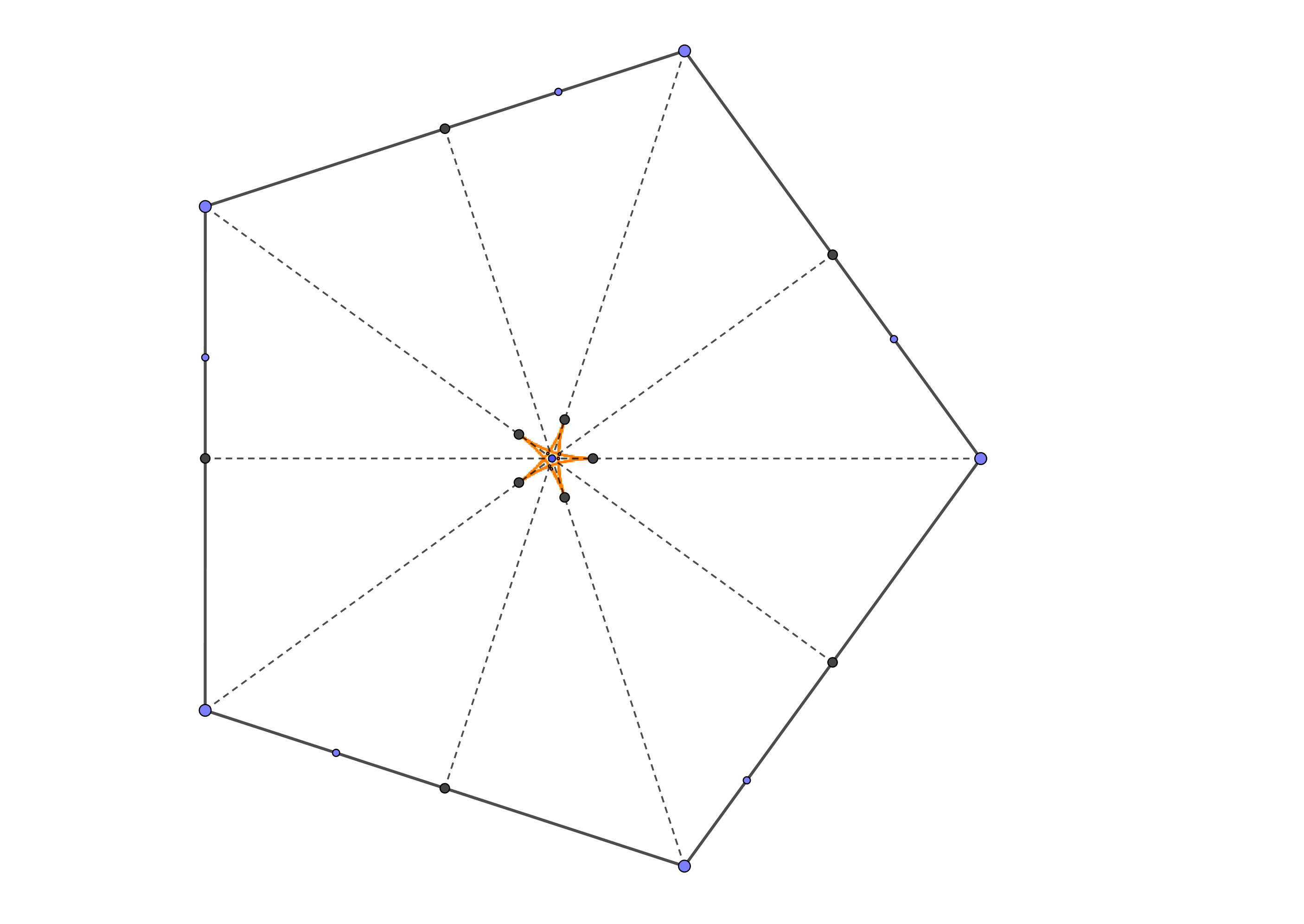} }
%\subcaption{b. Affine focal set.} }
% }
\caption{Envelopes of mid-area lines of a triangle and a regular pentagon.}
\label{fig:Triangle}
\end{figure}

Observe in Figure \ref{fig:Triangle}, left, resp. right, that the envelope $\H$ has three, resp. five, special points where the bisecting line cuts it. They are called {\it cusps} and, in the case of a triangle, they coincide with the mid-points of the medians. 
We also observe that each bisecting line through a cup connects a vertex of the polygon with the mid-point of the opposite side, which is not a vertex. However, we can include these points in the set of vertices and it looks natural to do it. Taking this remark into account, we shall consider in this paper only polygons with an even number of vertices, say $2n$, and such that
its {\it principal diagonals}, i.e., diagonals connecting opposite vertices, divide the polygonal region into two equal area parts. We shall call such polygons {\it half-area polygons}.
One can see in Figure \ref{fig:EnvelopeHOctagon} a half-area octagon with four hyperbolic arcs and three cusps.

\begin{figure}[htb]
 \centering
 \includegraphics[width=0.50\linewidth]{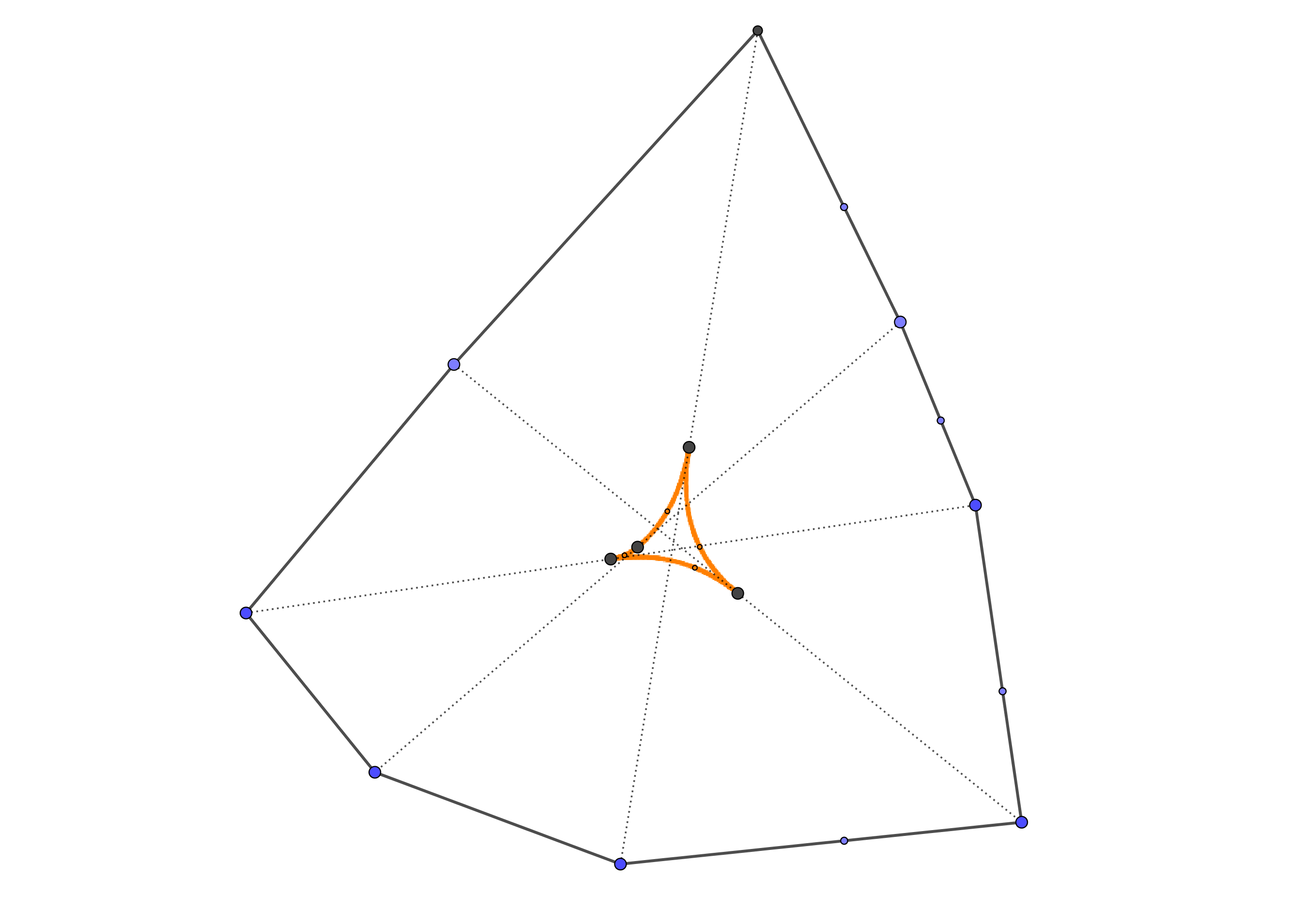}
 \caption{ $\H$ has four hyperbolic arcs but only three cusps. }
\label{fig:EnvelopeHOctagon}
\end{figure}

Besides the envelope of bisecting lines $\H$, there are two other closely related sets in this context. The first one, $\M$, is the $n$-polygon whose vertices are the mid-points of bisecting chords. The second one, $\E$, is the $n$-polygon whose vertices are the intersections of adjacent bisecting lines. We shall see that we can characterize the cusps of $\H$ just by properties of $\M$ and $\E$.  
Based on these properties, we prove that the number of cusps of $\H$ is odd and at least three, which is a discrete version of the well-known smooth counterpart (\cite{Noah}). 

Can we find examples with maximal number of cusps? For $n$ odd, there are interesting examples of $2n$ half-area polygons with $n$ cusps, that we call {\it odd-symmetric} polygons. For such a polygon, the envelope $\E$ reduces to a point, but not $\H$ and $\M$. For $n$ even, we can also construct $2n$ half-area polygons with maximal number, i.e. $(n-1)$, of cusps.  

How about the inverse construction, i.e., can we recover the polygon from $\H$? It is expected that there exists a $1$-parameter family of polygons with $\H$ as envelope of bisecting lines, as occurs with evolutes and involutes, but it is proved in \cite{Noah} that a polygon with no parallel sides is uniquely determined by the envelope $\H$. 
On the other hand, we prove that generically there exists exactly a one-parameter family of polygons with the same $\M$ and $\E$ as the original polygon. Thus, by considering $\M$ and $\E$ in instead of $\H$, we obtain a more natural answer to the recovering problem. 

All the discrete constructions are inspired by their smooth counterparts. The parameterization of smooth curves that inspired the
half-area polygons is is called half-area parameterization. For a given smooth closed convex planar curve, it may be difficult to describe the bisection lines analytically, and so discretization is an alternative. Since the envelopes of bisection lines of half-area polygons preserve the main geometric properties of the envelopes of bisection lines of smooth curves, the present paper proposes, in the spirit of Discrete Differential Geometry, half-area polygons as the correct discretization of smooth curves for the half-area problem.
In \cite{Craizer1} and \cite{Craizer2}, one can find other classes of polygons considered as discrete conterparts of different types of parameterizations of smooth curves.

In Section 2, we study the envelope of bisection lines of a smooth closed convex planar curve. In Section 3, we study the basic facts concerning half-area polygons. In Section 4, we discuss the three types of envelopes of bisection lines of half-area polygons. In Section 5, we discuss the inverse construction for half-area polygons.  
The definitions and results of Sections 3, 4 and 5 should be read comparing with the corresponding definitions and results for smooth curves described in Section 2. We want also to thank Dan Reznik for calling our attention to the envelope of bisecting lines problem.

\section{Smooth Convex Planar Curves}

\subsection{Family of bisection lines}

Consider a parameterization $\gamma:S^1\to\Gamma$ of a smooth planar closed convex curve $\Gamma$. For each $s\in S^1$, denote $l(s)$ a line passing through $\gamma(s)$ 
and that intersect $\Gamma$ at another point, denoted $\gamma(s^*)$. The line $l(s)$ is called a {\it bisection line} if it divides the region enclosed by $\Gamma$ into two regions of equal area. 
It is easy to see that the family of bisection lines exists and is unique. 

\begin{lemma}\label{lemma:BisectionLines}
The family $l(s)$, $s\in S^1$, is the family of bisection lines if and only if, for some fixed $O\in\mathbb{R}^2$, 
\begin{equation}\label{eq:Bisection}
\left[ \gamma'(s), \gamma(s)-O \right]=\left[ \gamma'(s^*), \gamma(s^*) -O\right],
\end{equation}
for any $s\in S^1$.
\end{lemma}

\begin{proof}
In order to keep the formulas shorter, we shall assume that $O$ is the origin of $\mathbb{R}^2$. By Green's theorem, the line $l(s)$ divide $\Gamma$ into two regions of equal areas if and only if 
\begin{equation}\label{eq:BisectionB}
\int_s^{s^*} \left[ \gamma'(t), \gamma(t)\right] dt-\int_s^{s^*} \left[ \gamma'(t^*), \gamma(t^*) \right] dt=0.
\end{equation}
Assume first that $l(s)$ is the family of bisection lines. 
Differentiating Equation \eqref{eq:BisectionB} with respect to $s$ we obtain that
$$
\left[ \gamma'(s^*), \gamma(s^*) \right]\frac{ds^*}{ds}-\left[ \gamma'(s), \gamma(s) \right] =\left[ \gamma'(s), \gamma(s) \right]\frac{ds^*}{ds}-\left[ \gamma'(s^*), \gamma(s^*) \right]
$$
and so
$$
\left[ \gamma'(s^*), \gamma(s^*) \right]=\left[ \gamma'(s), \gamma(s) \right],
$$
thus proving that Equation \eqref{eq:Bisection} holds. Conversely, if Equation \eqref{eq:Bisection} holds, then 
\begin{equation*}\label{eq:Bisection2}
\int_s^{s^*} \left[ \gamma'(t), \gamma(t)\right] dt-\int_s^{s^*} \left[ \gamma'(t^*), \gamma(t^*) \right] dt=C.
\end{equation*}
for some constant $C$. But if we change the r\^oles of $s$ and $s^*$, the left hand of the above equation changes sign, which implies that $C=0$, and so Equation \eqref{eq:BisectionB} holds, which implies that $l(s)$ is a bisection line.
\end{proof}

\subsection{Half-area parameterizations}

From now on $l(s)$ will denote the family of bisection lines of a smooth planar closed convex curve $\Gamma$. 
A parameterization such that
$s^*=s+\pi$ will be called a {\it half-area parameterization}.

\begin{lemma}\label{lemma:ExistenceHAparameter}
Given a smooth planar closed convex curve $\Gamma$, there exist half-area parameterizations $\gamma:S^1\to\Gamma$.
\end{lemma}
\begin{proof}
Consider a smooth parameterization $\gamma_1:S^1\to\Gamma$, where $S^1=\mathbb{R}$ with the identification $z= z+2\pi$.  
Denote $t_0=0^*$ and take any increasing diffeomorphism $h:[0,t_0]\to[0,\pi]$ whose derivatives of all orders coincide at $0$ and $t_0$. This map can be extended to a diffeomorphism $h:S^1\to S^1$ by the condition 
$$
h(t^*)=h(t)+\pi. 
$$
It is then clear that $\gamma=\gamma_1\circ h^{-1}$ is a half-area parameterization of $\Gamma$.
\end{proof}

Denote by 
$[A,B]$ the determinant of the matrix whose columns are $A$ and $B$ and define 
\begin{equation}
a(s)=\left[  \gamma'(s), \gamma(s^*)-\gamma(s) \right].
\end{equation}

\begin{lemma}\label{lemma:HalfArea}
Consider a smooth planar closed convex curve $\Gamma$ and a parameterization $\gamma:S^1\to\Gamma$. Then 
the following conditions are equivalent:

\begin{enumerate}
\item
$\gamma(s)$ is a half-area parameterization.

\item
$$
\frac{ds^*}{ds}=1.
$$

\item
\begin{equation}
a(s)=a(s^*). 
\end{equation}

\end{enumerate}
\end{lemma}

\begin{proof}
The condition $\tfrac{ds^*}{ds}=1$, we conclude that $s^*=s+c$, for some constant $c$. But since $(s^*)^*=s+2\pi$, we conclude that $c=\pi$, thus proving the equivalence between $(1)$ and $(2)$.

Given $s$ and $\Delta s$, let $\Delta s^*=(s+\Delta s)^*-s^*$. 
Denote by $A_1=A_1(s,\Delta s)$ the area of the region bounded by the arc ${\gamma(s)\gamma(s+\Delta s)}$
and the chords $\gamma(s)\gamma(s^*)$ and $\gamma(s+\Delta s)\gamma(s^*)$. Then, by Green's theorem
$$
A_1=\frac{1}{2}\left[ \gamma'(s)\Delta s, \gamma(s^*)-\gamma(s)\right]+O(\Delta s^2).
$$
Now denote by $A_2=A_2(s,\Delta s)$ the area of the region bounded by the arc $\gamma(s^*)\gamma(s^*+\Delta s^*)$
and the chords $\gamma(s^*)\gamma(s+\Delta s)$ and $\gamma(s^*+\Delta s^*)\gamma(s+\Delta s)$. Similarly we have
$$
A_2=\frac{1}{2}\left[ \gamma'(s^*)\Delta s^*, \gamma(s+\Delta s)-\gamma(s^*)\right]+O(\Delta s^2).
$$
On the other hand, by the definition of $s^*$, we have that $A_1=A_2$. Divividing this equality by $\Delta s$ and taking the limit when $\Delta s\to 0$
we obtain
$$
[\gamma'(s),\gamma(s^*)-\gamma(s)]=[\gamma'(s^*),\gamma(s)-\gamma(s^*)]\frac{ds^*}{ds}.
$$
It is now easy to see that conditions $(2)$ and $(3)$ are equivalent.
\end{proof}

\subsection{Envelope of bisecting lines}

From now on, we shall assume that $\gamma:S^1\to\R^2$ is a half-area parameterization of a convex closed planar curve $\Gamma$.

\subsubsection{Basic equations}

Consider the transversal vector field
\begin{equation*}
v(s)=\gamma(s+\pi)-\gamma(s).
\end{equation*}
Then we have that $[\gamma'(s),v(s)]=a(s)$ and $[v(s),v'(s)]=2a(s)$.
Writing
\begin{equation}
v'(s)= \alpha(s) v(s)+\beta(s) \gamma'(s).
\end{equation}
we obtain that 
\begin{equation*}\label{eq:alphabeta}
\alpha(s)=\frac{[\gamma'(s),\gamma'(s+\pi)]}{a(s)},\ \ \beta(s)=-2.
\end{equation*}

\subsubsection{Mid-points of bisecting chords}

For $(x,y)\in\mathbb{R}^2$, let 
\begin{equation*}\label{eq:DefineF}
F(x,y,s)=\left[ (x,y)-\gamma(s),  v(s)  \right].
\end{equation*}
The bisecting lines are defined by $F=0$ and the envelope of the bisecting lines by $F=F_s=0$ (\cite[Ch.5]{Bruce-Giblin},\cite{Nishimura}). Observe that
\begin{equation*}
F_s=\left[ (x,y)-\gamma(s), v'(s) \right]-\left[ \gamma' (s), v(s) \right].
\end{equation*}
Now $F=0$ implies that $(x,y)=\gamma(s)+\lambda(s) v(s)$, for some function $\lambda(s)$. Then $F_s=0$ implies that
$2a\lambda-a=0$, 
which implies that $\lambda=\frac{1}{2}$. 
The envelope of bisecting lines is thus given by 
\begin{equation*}
\E(s)=\gamma(s)+\frac{1}{2} v(s)=\frac{1}{2}\left( \gamma(s)+\gamma(s+\pi) \right).
\end{equation*}
Geometrically $\E$ correspond to the midpoints of bisecting chords.

\subsubsection{Vertices and cusps}

We say that $s$ is a {\it half-area vertex} of $\gamma$ if $\alpha(s)=0$, or equivalently $[\gamma'(s),\gamma'(s+\pi)]=0$. Geometrically, this condition means that the tangents of $\gamma$ at $s$ and $s+\pi$ are parallel. A half-area vertex is said to be {\it ordinary}
if $\alpha'(s)\neq 0$. 
A {\it cusp point} of $\E$ is a point such that $\E'(s)=0$. A cusp point of $\E$ is said to be {\it ordinary} if $\E''(s)\neq 0$.

\begin{Proposition}
Let $\Gamma$ be a smooth convex closed planar curve and $\E$ its bisecting lines envelope. Then
\begin{enumerate}
\item
Cusp points of $\E$ correspond to half-area vertices of $\Gamma$.
\item
Ordinary cusps of $\E$ correspond to ordinary half-area vertices of $\Gamma$.
\end{enumerate}
\end{Proposition}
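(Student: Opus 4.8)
The plan is to differentiate the closed formula $\E(s)=\frac{1}{2}\bigl(\gamma(s)+\gamma(s+T_0)\bigr)=\gamma(s)+\frac{1}{2}v(s)$ obtained above and read both statements directly off the structure equation $v'(s)=\alpha(s)v(s)-2\gamma'(s)$ (recall that $\beta=-2$).

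First I would compute the velocity. Differentiating gives $\E'(s)=\gamma'(s)+\frac{1}{2}v'(s)$, and substituting the structure equation the two $\gamma'(s)$ terms cancel:
\begin{equation*}
\E'(s)=\gamma'(s)+\frac{1}{2}\bigl(\alpha(s)v(s)-2\gamma'(s)\bigr)=\frac{\alpha(s)}{2}\,v(s).
\end{equation*}
This single identity is the heart of the argument. Since $\gamma$ is closed and convex, the chord $v(s)=\gamma(s+T_0)-\gamma(s)$ joins two distinct points and hence never vanishes; therefore $\E'(s)=0$ if and only if $\alpha(s)=0$. By definition this says that the stationary points of $\E$ are exactly the pt-vertices of $\gamma$, which proves the first item.

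For the second item I would differentiate once more, obtaining $\E''(s)=\frac{1}{2}\alpha'(s)v(s)+\frac{1}{2}\alpha(s)v'(s)$. At a stationary point $\alpha(s)=0$, so the second term drops out and $\E''(s)=\frac{1}{2}\alpha'(s)v(s)$. Using again that $v(s)\neq 0$, the cusp condition $\E''(s)\neq 0$ is then equivalent to $\alpha'(s)\neq 0$, which is precisely the requirement that the pt-vertex be ordinary. This establishes the second item.

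I do not anticipate a serious obstacle: the computation is short once the explicit expression for $\E$ and the structure equation are in hand, and the only point needing care is the non-vanishing of $v(s)$, which is exactly where convexity enters, guaranteeing that $\gamma(s)$ and $\gamma(s+T_0)$ are genuinely distinct. The one place to stay alert is the cancellation of the $\gamma'(s)$ terms in $\E'(s)$; had $\beta$ not been equal to $-2$, the tangential component would survive and the clean correspondence between stationary points and pt-vertices would break down.
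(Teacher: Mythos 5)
Your proof is correct and follows essentially the same route as the paper: both rely on the structure equation $v'(s)=\alpha(s)v(s)-2\gamma'(s)$ to compute $\E'$ and $\E''$. Your version is in fact slightly cleaner, since the single identity $\E'(s)=\tfrac{\alpha(s)}{2}v(s)$ (valid for all $s$, with $v(s)\neq 0$ by convexity) yields both directions of item (1) at once, whereas the paper argues the two implications separately.
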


\begin{proof}
The derivative of $\E$ is 
$$
\E'(s)=\frac{1}{2}\left( \gamma'(s)+\gamma'(s+\pi) \right).
$$
If $\E'(s)=0$, then clearly $\alpha(s)=0$. Conversely, if $\alpha(s)=0$, then \newline 
$v'(s)=-2\gamma'(s)$, which implies that
$$
\E'(s)=\gamma'(s)+\frac{1}{2}v'(s)=0.
$$
We conclude that half-area vertices of $\Gamma$ correspond to cusp points of $\E$.

For the second item, consider a point $s$ with $\alpha(s)=0$. Then
$$
v''(s)=\alpha'(s)v(s)-2\gamma''(s). 
$$
Thus
$$
\E''(s)=\gamma''(s)+\frac{1}{2}v''(s)=\alpha'(s)v(s).
$$
We conclude that ordinary half-area vertices of $\Gamma$ correspond to ordinary cusps of $\E$.
\end{proof}

\subsubsection{Three vertices theorem}

It is easy to check that 
\begin{equation}\label{eq:alphasym}
\alpha(s+\pi)=- \alpha(s).
\end{equation}
This condition implies that the number of half-area vertices in the interval $[0,\pi]$ is odd. Moreover, Equation \eqref{eq:alphasym} implies that
\begin{equation}\label{eq:alphaint}
\int_0^{2\pi} \alpha(s)ds=0.
\end{equation}

\begin{lemma}
We have that
\begin{equation}\label{eq:alphagammaints}
\int_0^{2\pi} \alpha(s)\gamma(s) ds=0.
\end{equation}
\end{lemma}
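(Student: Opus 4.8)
The plan is to exploit the basic structural equation $v'(s)=\alpha(s)v(s)-2\gamma'(s)$ established above, which lets me trade the factor $\alpha(s)$ for exact derivatives. Solving for $\alpha(s)v(s)$ and recalling that $v(s)=\gamma(s+T_0)-\gamma(s)$, I obtain the pointwise identity $\alpha(s)\gamma(s+T_0)-\alpha(s)\gamma(s)=v'(s)+2\gamma'(s)$. The right-hand side is the total derivative of the vector field $v(s)+2\gamma(s)$, so integrating over one full period $[0,2T_0]$ annihilates it completely: both $\int_0^{2T_0}v'(s)\,ds$ and $\int_0^{2T_0}\gamma'(s)\,ds$ vanish, since $\gamma$ is closed and hence $v$ is $2T_0$-periodic.

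It then remains to evaluate the integrated left-hand side, namely $\int_0^{2T_0}\alpha(s)\gamma(s+T_0)\,ds-\int_0^{2T_0}\alpha(s)\gamma(s)\,ds$. I would reduce the first integral to the second by the change of variables $u=s+T_0$ together with the antisymmetry relation $\alpha(s+T_0)=-\alpha(s)$ from Equation \eqref{eq:alphasym}. Concretely, the substitution produces $\int_{T_0}^{3T_0}\alpha(u-T_0)\gamma(u)\,du$; the antisymmetry gives $\alpha(u-T_0)=-\alpha(u)$, while $\int_{T_0}^{3T_0}$ equals $\int_0^{2T_0}$ because the integrand is $2T_0$-periodic. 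Hence $\int_0^{2T_0}\alpha(s)\gamma(s+T_0)\,ds=-\int_0^{2T_0}\alpha(s)\gamma(s)\,ds$.

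Writing $I=\int_0^{2T_0}\alpha(s)\gamma(s)\,ds$, the integrated identity then reads $-I-I=0$, that is $-2I=0$, which forces $I=0$ and proves Equation \eqref{eq:alphagammaints}. The only place demanding care — and the step I regard as the main, though modest, obstacle — is the bookkeeping of periodicity in the change of variables: one must use that $\gamma$, $v$, and $\alpha$ all extend $2T_0$-periodically, and that a periodic function has the same integral over every interval whose length equals its period, so that shifting the limits from $[T_0,3T_0]$ back to $[0,2T_0]$ is legitimate. Everything else is a direct manipulation of the relations already derived.
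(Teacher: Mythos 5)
Your proof is correct and rests on exactly the same two ingredients as the paper's — the structural identity $\alpha(s)v(s)=v'(s)+2\gamma'(s)$ and the antisymmetry $\alpha(s+T_0)=-\alpha(s)$ — applied in the opposite order: the paper first folds the integral onto $[0,T_0]$ via the antisymmetry and then integrates the exact derivative over the half-period, whereas you integrate the identity over the full period (where periodicity kills the right-hand side trivially) and then use the shift to obtain $-2I=0$. This is essentially the paper's argument, arranged slightly more cleanly.
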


\begin{proof}
Observe that
\begin{equation*}
\int_0^{2\pi} \alpha(s)\gamma(s) ds=\int_0^{\pi}\alpha(s)\gamma(s)ds+\int_0^{\pi}\alpha(s+\pi)\gamma(s+\pi)ds
\end{equation*}
\begin{equation*}\label{eq:alphagammaint}
=-\int_0^{\pi}\alpha(s)v(s)ds=-\int_0^{\pi}v'(s)+2\gamma'(s) ds=v(0)-v(\pi)+2(\gamma(0)-\gamma(\pi))=0,
\end{equation*}
thus proving the lemma.
\end{proof}

\begin{thm}\label{thm:ThreeVerticesSmooth}
The number of half-area vertices in the interval $[0,\pi]$ is odd and at least $3$.
\end{thm}

\begin{proof}
By Equation \eqref{eq:alphaint}, there exists at least one zero of $\alpha$ in the interval $[0,\pi)$. Assume by contradiction that there is only one such zero $t_0$ in the interval $[0,\pi)$. Write the equation of the bisecting line passing through $\gamma(t_0)=(x_0,y_0)$ as $A(x-x_0)+B(y-y_0)=0$. We may assume that $\alpha>0$ in the region $A(x-x_0)+B(y-y_0)>0$ (or else change the signs of $A$ and $B$). Then 
$$
\int_{t_0}^{t_0+\pi}\alpha (A\gamma_1+B\gamma_2)>0, \ \ \int_{t_0+\pi}^{t_0+2\pi}\alpha (A\gamma_1+B\gamma_2)>0,
$$
where $\gamma=(\gamma_1,\gamma_2)$. But these contradicts Equation \eqref{eq:alphagammaints}
\end{proof}

\subsection{Inverse construction}

Can we recover $\gamma$ from $\E(\gamma)$? In this section, we discuss this question.

\begin{lemma}
Consider a parameterized curve $\bar\gamma:S^1\to\mathbb{R}^2$ and denote by $\bar{l}(s)$ the line connecting
$\bar\gamma(s)$ and $\bar\gamma(s+\pi)$. Then $\bar{l}(s)$ coincides with $l(s)$, for any $s\in S^1$, if and only if 
\begin{equation}\label{eq:GammaC}
\bar\gamma(s)=c(s)\gamma(s)+(1-c(s)) \gamma(s+\pi),
\end{equation}
for some function $c(s)$ with $c(s)=c(s+\pi)$.
\end{lemma}

\begin{proof}
Just observe that $\bar\gamma(s)$ and $\bar\gamma(s+\pi)$ must be in $l(s)$ and that their mean point must coincide with $\E(s)$.
\end{proof}

\begin{lemma}
Consider $\bar\gamma$ given by Equation \eqref{eq:GammaC}, for some function $c(s)$ with $c(s)=c(s+\pi)$.
Then $s^*=s+\pi$, for any $s\in S^1$, if and only if 
\begin{equation}\label{eq:ConstantC}
c'(s)\left[ \gamma(s)-O, \gamma(s+\pi)-O \right]=0,
\end{equation}
for some fixed $O\in\mathbb{R}^2$ and any $s\in S^1$.
\end{lemma}

\begin{proof}
In order to keep the notations shorter, we shall assume that $O$ is the origin of $\mathbb{R}^2$. By Lemma \ref{lemma:BisectionLines}, 
$\bar{l}(s)$ divides the region enclosed by $\bar\gamma$ into equal regions, for any $s\in S^1$ if and only if
$$
\Delta(s)=\left[\bar\gamma(s),\bar\gamma'(s)  \right]-\left[\bar\gamma(s+\pi),\bar\gamma'(s+\pi)  \right]=0,
$$
for any $s\in S^1$. Now straightforward calculations show that
$$
\Delta(s)=(2c-1)\left( [\gamma(s),\gamma'(s)]- [\gamma(s+\pi),\gamma'(s+\pi)]  \right)-2c'(s)[\gamma(s),\gamma(s+\pi)].
$$
But since $l(s)$ divide the region enclosed by $\gamma$ into equal areas, the first parcel of the second member of the above equation vanishes. We conclude that $\Delta(s)=0$ if and only if
$$
c'(s)[\gamma(s),\gamma(s+\pi)]=0,
$$
thus proving the lemma.
\end{proof}

Observe that Equation \eqref{eq:ConstantC} holds if $c(s)$ is constant. On the other side, if $\gamma$ is symmetric with respect to $O$ then Equation \eqref{eq:ConstantC} holds for any $c(s)$. When $\left[ \gamma(s)-O, \gamma(s+\pi)-O \right]=0$, for $s$ in some interval $I\subset S^1$, we say that $\gamma$ is symmetric with respect to $O$ in the interval $I$. We observe that a generic curve $\gamma$ has not intervals of symmetry.

We can now state the main proposition of this section:

\begin{Proposition}\label{prop:RecoveringSmooth}
Assume that $\gamma$ has no intervals of symmetry. Then the envelope of bisecting lines of a curve $\bar\gamma$ coincides with the envelope of bisecting lines of $\gamma$ if and only if $\bar{\gamma}$ is given by Equation \eqref{eq:GammaC}, 
for some constant $c\in\mathbb{R}$. 
\end{Proposition}

\section{Half-Area Polygons}

Consider a polygon $\gamma$ with $2n$ vertices $\gamma(i)$, $1\leq i \leq 2n$. We denote by $\gamma(i+\tfrac{1}{2})$ the side connecting $\gamma(i)$ and $\gamma(i+1)$.
Note that $(i+\tfrac{1}{2})$
here has not a quantitative meaning, it is just a notation to indicate that we are between $i$ and $(i+1)$.  
In the same idea, for a function $f(i)$, we denote the discrete derivative of $f$ by 
$$
f'(i+\tfrac{1}{2})=f(i+1)-f(i).
$$
All indices along the paper are taken modulus $2n$. 

\subsection{Half-area polygons and parallel central diagonals} 

Consider a convex $2n$-polygon $\gamma$. When the principal diagonals of $\gamma$, i.e., the lines $l(i)$ passing through
$\gamma(i)$ and $\gamma(i+n)$, divide the polygon into parts with equal areas, we say that $\gamma$ is a {\it half-area}
polygon.

Note that any convex polygon may be transformed into a half-area polygon by the addition of some vertices. In fact, by including the other intersection of the bisecting lines through the vertices in the list of vertices, we obtain a half-area polygon (see Figure \ref{fig:Triangle}). The new polygon may have collinear vertices, but this is allowed in our class. So, although in this paper we shall consider only half-area polygons, our results can be applied to any convex polygon.

Given a side $\gamma(i+\frac{1}{2})$, we shall call {\it central} the diagonals $\gamma(i)\gamma(i+n+1)$ and $\gamma(i+1)\gamma(i+n)$. Define the vector $v(i)$ by
\begin{equation}\label{definev}
v(i)=\gamma(i+n)-\gamma(i),
\end{equation}
and the quantities $a^{\pm}(i+\tfrac{1}{2})$ by
\begin{equation}\label{defineapm}
a^+(i+\tfrac{1}{2})=[\gamma'(i+\tfrac{1}{2}), v(i)], \ \ \ a^-(i+\tfrac{1}{2})=[\gamma'(i+\tfrac{1}{2}),v(i+1)],
\end{equation}
(see Figure \ref{fig:TrapezoidsAreas}).

\begin{figure}[htb]
 \centering
 \includegraphics[width=0.60\linewidth]{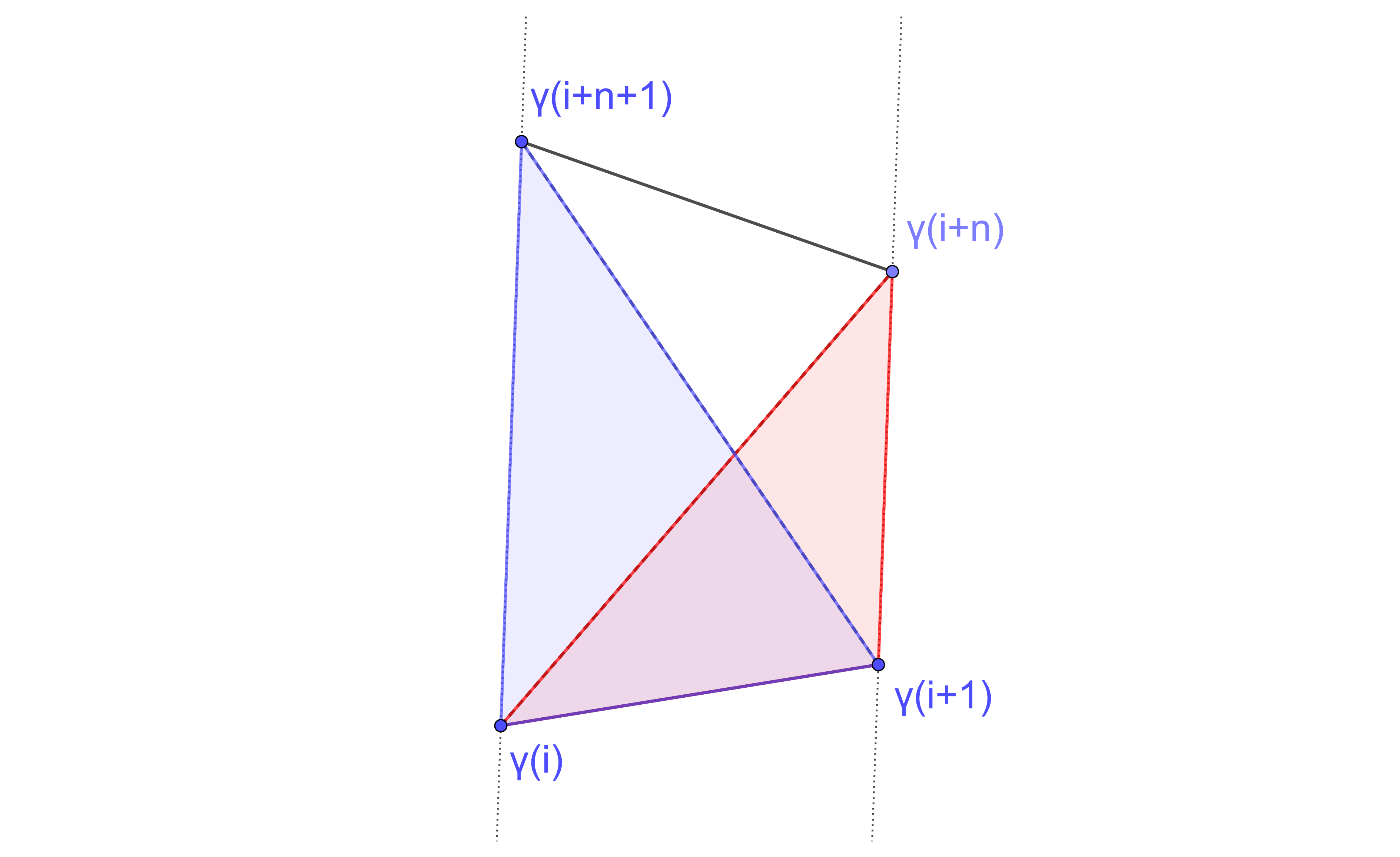}
 \caption{ The parallel central diagonals. The quantities $a^{+}(i+\tfrac{1}{2})$ and $a^{-}(i+\tfrac{1}{2})$ correspond to two times the areas of the triangles $\gamma(i)\gamma(i+1)\gamma(i+n)$ and $\gamma(i)\gamma(i+1)\gamma(i+1+n)$, respectively. }
\label{fig:TrapezoidsAreas}
\end{figure}

\begin{lemma}
Consider a convex $2n$-polygon $\gamma$. The following conditions are equivalent:

\begin{enumerate}
\item
The polygon $\gamma$ is half-area.

\item
Adjacent central diagonals are parallel.

\item
For $1\leq i\leq 2n$,
\begin{equation*}
a^+(i+\tfrac{1}{2})=a^-(i+n+\tfrac{1}{2}).
\end{equation*}

\item
For $1\leq i\leq 2n$,
\begin{equation*}
a^-(i+\tfrac{1}{2})=a^+(i+n+\tfrac{1}{2}).
\end{equation*}

\end{enumerate}

\end{lemma}

\begin{proof}
The equivalence between conditions $(2)$, $(3)$ and $(4)$ is immediate. That $(1)$ implies $(2)$ is also immediate.
Finally, for the implication $(2)\to (1)$, observe that the parallelism between the central diagonals $\gamma(i)\gamma(i+n+1)$ and $\gamma(i+1)\gamma(i+n)$ implies that the area of the polygon $\gamma(i),...,\gamma(i+n),\gamma(i)$ equals the area of the polygon $\gamma(i+1),...,\gamma(i+n+1),\gamma(i+1)$. By induction, we have that the area of the polygon $\gamma(i),...,\gamma(i+n),\gamma(i)$ is independent of $i$. Applying this fact to $\gamma(i+n),\gamma(i+n+1),...,\gamma(i),\gamma(i+n)$, we conclude that the polygon is half-area.
\end{proof}

\subsection{The space of half-area polygons}

\begin{lemma}
The space of $2n$ half-area polygons has dimension $3n$.
\end{lemma}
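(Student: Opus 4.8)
The plan is to exhibit the half-area polygons as the regular zero set of a map defined on the full $4n$-dimensional space of convex $2n$-gons, so that the dimension comes out as $4n$ minus the number of independent defining equations. A convex $2n$-gon is determined by its vertices $\gamma(1),\dots,\gamma(2n)\in\R^2$, giving an ambient space which is open in $\R^{4n}$. Using the characterization recalled above, namely that $\gamma$ is half-area exactly when the two central diagonals attached to each side are parallel, I would set
$$\Phi_i(\gamma)=\bigl[\,\gamma(i+n+1)-\gamma(i),\ \gamma(i+n)-\gamma(i+1)\,\bigr],$$
so that $\gamma$ is half-area if and only if $\Phi_i=0$ for all $i$. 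The first point to notice is that the side $\gamma(i+\tfrac12)$ and its opposite side $\gamma(i+n+\tfrac12)$ carry the same pair of central diagonals, whence $\Phi_{i+n}=\Phi_i$; there are therefore only $n$ genuine equations $\Phi_1=\dots=\Phi_n=0$, and it remains to show that they are independent.

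To make the equations tractable I would switch to the coordinates
$$p(i)=\tfrac12\bigl(\gamma(i)+\gamma(i+n)\bigr),\qquad v(i)=\gamma(i+n)-\gamma(i),$$
where the $p(i)$ are the vertices of the midpoint polygon $\M$ and the $v(i)$ collect the principal-diagonal vectors; this is a linear change of variables on $\R^{4n}$, under which $p(i+n)=p(i)$ and $v(i+n)=-v(i)$. Expanding the determinant and using $\gamma(j)=p(j)-\tfrac12 v(j)$ one obtains the clean expression
$$\Phi_i=\bigl[\,p(i+1)-p(i),\ v(i)+v(i+1)\,\bigr],\qquad i=1,\dots,n,$$
with the understanding that $v(n+1)=-v(1)$ in the last equation.

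The decisive step, which I expect to be the main obstacle, is the linear independence of $\Phi_1,\dots,\Phi_n$. I would fix the midpoints $p(i)$ to be those of a nondegenerate convex $n$-gon; then each $\Phi_i$ is a linear functional in the unknowns $v(1),\dots,v(n)\in\R^2$, and independence amounts to surjectivity of the resulting linear map $L\colon\R^{2n}\to\R^n$. Writing $d_i=(p(i+1)-p(i))^{\perp}$, surjectivity of $L$ is equivalent to injectivity of $L^{\top}$: if scalars $c_1,\dots,c_n$ satisfy $c_jd_j=-c_{j-1}d_{j-1}$ for $2\le j\le n$ together with the twisted closure relation $c_1d_1=c_nd_n$ forced by $v(n+1)=-v(1)$, then all $c_j$ vanish. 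Unwinding the recursion gives $c_jd_j=(-1)^{j-1}c_1d_1$, so the closure relation reads $\bigl(1-(-1)^{n-1}\bigr)c_1d_1=0$. When $n$ is even this already gives $c_1d_1=0$; when $n$ is odd, were $c_1d_1\neq 0$ the vectors $c_jd_j=(-1)^{j-1}c_1d_1$ would all be nonzero multiples of $c_1d_1$, forcing every $d_j$, hence every edge of $\M$, to be parallel, contradicting nondegeneracy, so again $c_1d_1=0$. Since $d_1\neq 0$ we get $c_1=0$, and the recursion propagates this to $c_1=\dots=c_n=0$; thus $L$ is onto.

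It remains to assemble the count. Since $\Phi_i$ is, for fixed midpoints, linear in the diagonal vectors, the partial differential of $\Phi=(\Phi_1,\dots,\Phi_n)$ in the $v$-directions alone is exactly the map $L$ above; hence $\Phi$ is a submersion at every $2n$-gon whose midpoint polygon is nondegenerate, an open and dense condition. By the regular value theorem the half-area locus is, near such polygons, a smooth manifold of dimension $4n-n=3n$. Equivalently, one may read off the dimension directly: the $2n$ midpoint coordinates are free, and for each admissible choice the $n$ independent linear constraints cut the $2n$-dimensional space of diagonal vectors down to dimension $n$, giving $2n+n=3n$ in total.
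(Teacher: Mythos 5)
Your proof is correct, but it takes a genuinely different route from the paper's. The paper parametrizes the half-area locus constructively: it takes $\gamma(1),\dots,\gamma(n+1)$ free (dimension $2n+2$), places each of $\gamma(n+2),\dots,\gamma(2n-1)$ on a prescribed line through an earlier vertex (one dimension each, total $n-2$), and pins $\gamma(2n)$ as the intersection of two lines, giving $2n+2+(n-2)=3n$. You instead realize the locus as the zero set of $n$ equations $\Phi_i=0$ in the $4n$-dimensional ambient space and prove they are independent, which is the codimension-counting dual of the paper's argument. Your computation checks out: in the $(p,v)$ coordinates one indeed gets $\Phi_i=[\,p(i+1)-p(i),\,v(i)+v(i+1)\,]$, and the transpose argument for surjectivity of $L$ (with the twisted closure $v(n+1)=-v(1)$ and the parity split on $n$) is sound. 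Two small remarks: the relation is actually $\Phi_{i+n}=-\Phi_i$ rather than $\Phi_{i+n}=\Phi_i$ (swapping the two columns of the determinant flips the sign), which does not affect the conclusion that only $n$ equations are genuine; and your submersion argument degenerates exactly at polygons whose midpoint polygon $\M$ collapses (e.g.\ the symmetric ones, where $L\equiv 0$), so strictly speaking you compute the dimension only on the open stratum where $\M$ is nondegenerate --- but you flag this, and the paper's own proof is no more careful about genericity (it tacitly assumes the constructed lines intersect transversally). What each approach buys: the paper's gives an explicit recipe for building half-area polygons (used again in its Example for octagons), while yours identifies precisely where the half-area locus is a smooth $3n$-manifold and makes the independence of the defining constraints explicit, which is the part the paper leaves implicit.
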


\begin{proof}
We begin with $n+1$ arbitrary vertices $\gamma(1),...,\gamma(n+1)$, which has dimension $2n+2$. Then $\gamma(n+2)$
belongs to the line parallel to ${\gamma(2)\gamma(n+1)}$ through $\gamma(1)$. By induction,  for $2\leq k\leq n-1$, $\gamma(n+k)$ belongs to the line parallel to ${\gamma(k)\gamma(n+k-1)}$ through $\gamma(k-1)$, which has dimension $n-2$. Finally 
$\gamma(2n)$ belongs to the intersection of the line parallel to ${\gamma(n)\gamma(2n-1)}$ through $\gamma(n-1)$
and the line parallel to ${\gamma(1)\gamma(n)}$ through $\gamma(n+1)$.
\end{proof}

\begin{example}
In case $n=4$, we can choose arbitrarily the points $\gamma(i)$, $1\leq i\leq 5$,  $\gamma(6)$ belongs to the line parallel
to ${\gamma(2)\gamma(5)}$ through $\gamma(1)$ and $\gamma(7)$ belong to the parallel to ${\gamma(3)\gamma(6)}$ through $\gamma(2)$. Finally $\gamma(8)$ belongs to the intersection of the line parallel to ${\gamma(4)\gamma(7)}$ through $\gamma(3)$ and the line parallel to ${\gamma(1)\gamma(4)}$ through $\gamma(5)$(see Figure \ref{fig:HalfAreaOctagon}). The dimension of the space of half-area octagons is $12$.
\end{example}

\begin{figure}[htb]
 \centering
 \includegraphics[width=0.80\linewidth]{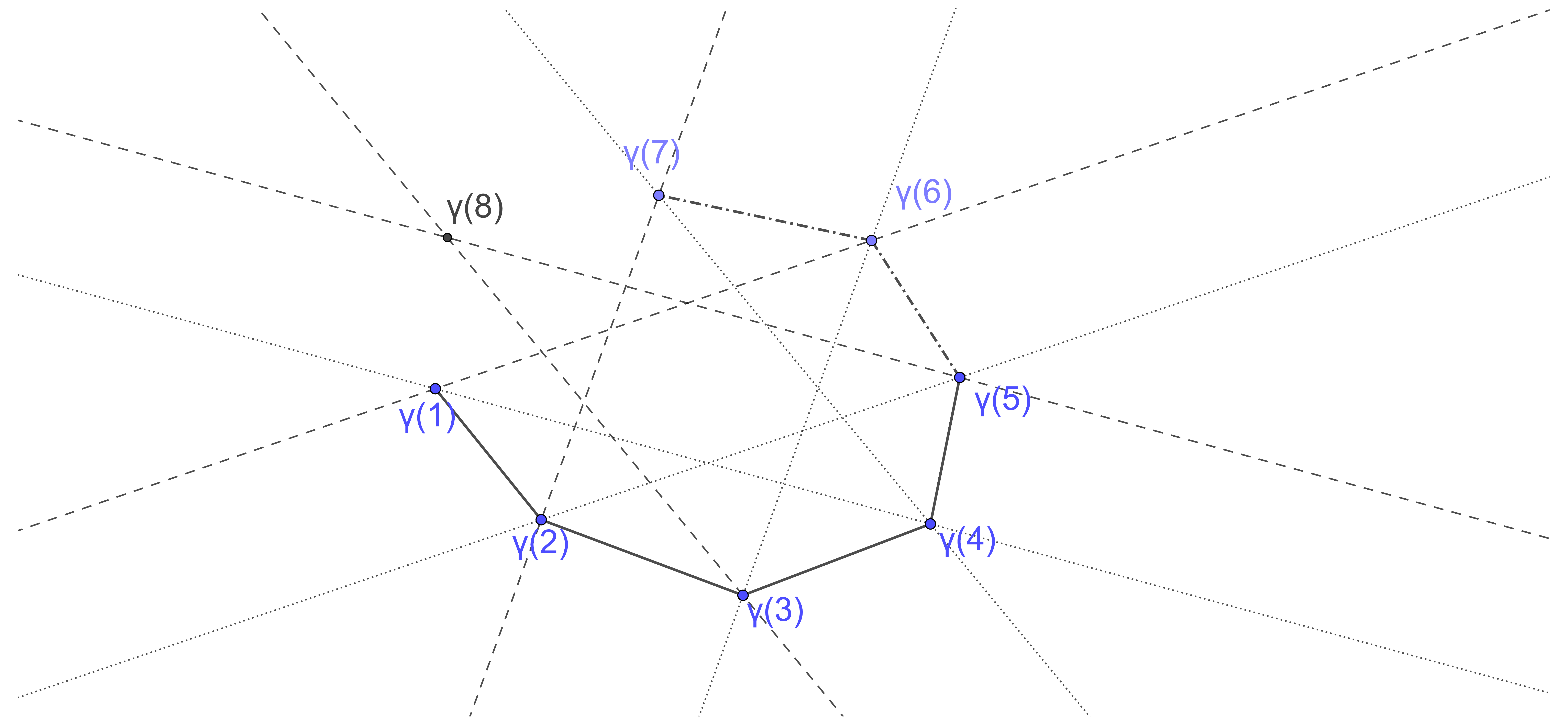}
 \caption{ The construction of a half-area octagon. }
\label{fig:HalfAreaOctagon}
\end{figure}

\subsection{Basic equations}

Define
\begin{equation}\label{definea}
a(i+\tfrac{1}{2})=a^+(i+\tfrac{1}{2})+a^-(i+\tfrac{1}{2}).
\end{equation}
Geometrically, $a(i+\tfrac{1}{2})=a(i+n+\tfrac{1}{2})$ corresponds to the area of the trapezoid wth vertices $\gamma(i)$, $\gamma(i+1)$, $\gamma(i+n)$ and $\gamma(i+n+1)$. Define also
\begin{equation}\label{definedelta}
\delta(i+\tfrac{1}{2})=[\gamma'(i+\tfrac{1}{2}),\gamma'(i+n+\tfrac{1}{2})],
\end{equation}
which measures the degree of non-parallelism of the non-parallel sides of the trapezoid.

\begin{lemma}
For a half-area polygon, the following relations hold:
\begin{enumerate}
\item
$$
[\gamma'(i+\tfrac{1}{2}),v'(i+\tfrac{1}{2})]=\delta(i+\tfrac{1}{2}).
$$

\item
$$
a^-(i+\tfrac{1}{2})-a^+(i+\tfrac{1}{2})=\delta(i+\tfrac{1}{2}).
$$

\item
$$
[v(i),v'(i+\tfrac{1}{2})]=[v(i+1),v'(i+\tfrac{1}{2})]=[v(i),v(i+1)]=a(i+\tfrac{1}{2}).
$$

\end{enumerate}
\end{lemma}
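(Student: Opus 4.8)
The plan is to handle the three items in increasing order of difficulty, noting at the outset that the first two, together with the two leftmost equalities of item 3, are purely formal consequences of the bilinearity and antisymmetry of the bracket $[\cdot,\cdot]$, so that the half-area hypothesis is needed only for the single identity $[v(i),v(i+1)]=a(i+\tfrac{1}{2})$.

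First I would record the discrete derivative of $v$. Since $v(i)=\gamma(i+n)-\gamma(i)$, regrouping terms gives
\[
v'(i+\tfrac{1}{2})=v(i+1)-v(i)=\gamma'(i+n+\tfrac{1}{2})-\gamma'(i+\tfrac{1}{2}).
\]
With this, item 1 is immediate: expanding $[\gamma'(i+\tfrac{1}{2}),v'(i+\tfrac{1}{2})]$ by bilinearity and discarding the self-bracket $[\gamma'(i+\tfrac{1}{2}),\gamma'(i+\tfrac{1}{2})]=0$ leaves exactly $\delta(i+\tfrac{1}{2})$. Item 2 then follows by writing $a^-(i+\tfrac{1}{2})-a^+(i+\tfrac{1}{2})=[\gamma'(i+\tfrac{1}{2}),v(i+1)-v(i)]$ and invoking item 1. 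Likewise, substituting $v'(i+\tfrac{1}{2})=v(i+1)-v(i)$ into $[v(i),v'(i+\tfrac{1}{2})]$ and $[v(i+1),v'(i+\tfrac{1}{2})]$ and using antisymmetry to kill the self-brackets yields $[v(i),v(i+1)]$ in both cases, settling the two leftmost equalities of item 3.

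The genuine content is the remaining equality $[v(i),v(i+1)]=a(i+\tfrac{1}{2})$, and this is the one step where I expect the real work to lie, since it is the only place the half-area property enters. The idea is to express the two central diagonals attached to the side $\gamma(i+\tfrac{1}{2})$ in terms of $v(i)$, $v(i+1)$ and $\gamma'(i+\tfrac{1}{2})$; a short computation gives
\[
\gamma(i+n+1)-\gamma(i)=v(i+1)+\gamma'(i+\tfrac{1}{2}),\qquad
\gamma(i+n)-\gamma(i+1)=v(i)-\gamma'(i+\tfrac{1}{2}).
\]
The half-area characterization recorded above says precisely that these two diagonals are parallel, i.e.\ that their bracket vanishes. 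Expanding that vanishing bracket by bilinearity, the self-bracket of $\gamma'(i+\tfrac{1}{2})$ drops out, the cross term $-[v(i+1),\gamma'(i+\tfrac{1}{2})]$ becomes $a^-(i+\tfrac{1}{2})$ and $[\gamma'(i+\tfrac{1}{2}),v(i)]$ is $a^+(i+\tfrac{1}{2})$, while $[v(i+1),v(i)]=-[v(i),v(i+1)]$; collecting these reassembles the vanishing relation into $-[v(i),v(i+1)]+a(i+\tfrac{1}{2})=0$, which is the claim. Thus the only nontrivial input is translating the geometric parallelism of the central diagonals into this algebraic bracket identity, and everything else is routine bookkeeping with the determinant.
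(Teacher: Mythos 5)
Your proof is correct, and for the one substantive step it takes a different route from the paper's. Items 1, 2 and the first two equalities of item 3 are handled exactly as in the paper (pure bilinearity plus $v'(i+\tfrac{1}{2})=\gamma'(i+n+\tfrac{1}{2})-\gamma'(i+\tfrac{1}{2})$), and your observation that the half-area hypothesis is needed only for $[v(i),v(i+1)]=a(i+\tfrac{1}{2})$ is accurate and worth making explicit. For that last equality the paper computes $[v(i),v'(i+\tfrac{1}{2})]$ directly, uses the antipodal relation $v(i+n)=-v(i)$ to rewrite it as $a^+(i+\tfrac{1}{2})+a^+(i+n+\tfrac{1}{2})$, and then invokes the area characterization $a^+(i+n+\tfrac{1}{2})=a^-(i+\tfrac{1}{2})$ of half-area polygons; you instead invoke the other characterization stated in the paper --- parallelism of the central diagonals $\gamma(i)\gamma(i+n+1)$ and $\gamma(i+1)\gamma(i+n)$ --- write both diagonal vectors in the frame $\{v(i),v(i+1),\gamma'(i+\tfrac{1}{2})\}$, and expand the vanishing bracket. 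Both arguments lean on an equivalence the paper asserts without proof (the paper on ``$a^+(i+\tfrac{1}{2})=a^-(i+n+\tfrac{1}{2})$ characterizes half-area,'' you on ``central diagonals parallel characterizes half-area''), so neither is more self-contained than the other; what your version buys is that the whole computation stays at the index $i$ without shifting by $n$, and it makes visible exactly which geometric fact (the parallelism) forces the identity, while the paper's version is a shorter direct expansion. Either is acceptable.
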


\begin{proof}
For the first item, 
$$
[\gamma'(i+\tfrac{1}{2}),v'(i+\tfrac{1}{2})]=[\gamma'(i+\tfrac{1}{2}),\gamma'(i+n+\tfrac{1}{2})]=\delta(i+\tfrac{1}{2}).
$$

For the second item,
$$
a^+(i+\tfrac{1}{2})+\delta(i+\tfrac{1}{2})=[\gamma'(i+\tfrac{1}{2}),v(i)]+[\gamma'(i+\tfrac{1}{2}),v'(i+\tfrac{1}{2})]=[\gamma'(i+\tfrac{1}{2}),v(i+1)]
$$
$$
=a^-(i+\tfrac{1}{2}).
$$

For the third item, 
$$
[v(i),\gamma'(i+n+\tfrac{1}{2})-\gamma'(i+\tfrac{1}{2})]=[\gamma'(i+\tfrac{1}{2}),v(i)]+[\gamma'(i+n+\tfrac{1}{2}),v(i+n)]=
$$
$$
=a^+(i+\tfrac{1}{2})+a^+(i+n+\tfrac{1}{2}),
$$
thus proving the lemma.
\end{proof}

\begin{lemma}
For a half-area polygon, we can write
$$
v'(i+\tfrac{1}{2})=\alpha_1 v(i)+\beta_1 \gamma'(i+\tfrac{1}{2})=\alpha_2 v(i+1)+\beta_2\gamma'(i+\tfrac{1}{2}),
$$
where
$$
\alpha_1=\frac{\delta(i+\tfrac{1}{2})}{a^+(i+\tfrac{1}{2})}, \ \beta_1=-\frac{a(i+\tfrac{1}{2})}{a^+(i+\tfrac{1}{2})}, \ \alpha_2=\frac{\delta(i+\tfrac{1}{2})}{a^-(i+\tfrac{1}{2})},\ \beta_2=-\frac{a(i+\tfrac{1}{2})}{a^-(i+\tfrac{1}{2})}.
$$
\end{lemma}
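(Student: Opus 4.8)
The plan is to exploit the fact that we are working in $\R^2$, where any two linearly independent vectors form a basis. Since $\gamma$ is convex, $a^+(i+\tfrac{1}{2})=[\gamma'(i+\tfrac{1}{2}),v(i)]\neq 0$, so the pair $\{v(i),\gamma'(i+\tfrac{1}{2})\}$ is a basis; likewise $a^-(i+\tfrac{1}{2})\neq 0$ makes $\{v(i+1),\gamma'(i+\tfrac{1}{2})\}$ a basis. Consequently $v'(i+\tfrac{1}{2})$ admits unique expansions $\alpha_1 v(i)+\beta_1\gamma'(i+\tfrac{1}{2})$ and $\alpha_2 v(i+1)+\beta_2\gamma'(i+\tfrac{1}{2})$, and the whole task reduces to identifying the four coefficients.

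To extract $\alpha_1$ and $\beta_1$ I would take the bracket of the first expansion against each basis vector, using antisymmetry $[A,B]=-[B,A]$. Pairing on the left with $\gamma'(i+\tfrac{1}{2})$ annihilates the $\beta_1$ term and leaves $[\gamma'(i+\tfrac{1}{2}),v'(i+\tfrac{1}{2})]=\alpha_1\,a^+(i+\tfrac{1}{2})$; by item (1) of the previous lemma the left-hand side equals $\delta(i+\tfrac{1}{2})$, whence $\alpha_1=\delta/a^+$. Pairing instead with $v(i)$ annihilates the $\alpha_1$ term and leaves $[v(i),v'(i+\tfrac{1}{2})]=-\beta_1\,a^+(i+\tfrac{1}{2})$; by item (3) the left-hand side equals $a(i+\tfrac{1}{2})$, so $\beta_1=-a/a^+$.

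The coefficients $\alpha_2$ and $\beta_2$ are obtained identically, pairing the second expansion against $\gamma'(i+\tfrac{1}{2})$ and against $v(i+1)$. The first pairing again invokes item (1) to give $\delta=\alpha_2\,a^-$, and the second invokes the equality $[v(i+1),v'(i+\tfrac{1}{2})]=a(i+\tfrac{1}{2})$ from item (3) to give $a=-\beta_2\,a^-$, yielding $\alpha_2=\delta/a^-$ and $\beta_2=-a/a^-$.

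I do not expect any genuine obstacle here; the argument is pure bookkeeping. The only points deserving care are the sign conventions produced by antisymmetry of the bracket when pairing with $v(i)$ and $v(i+1)$, and the tacit appeal to convexity to ensure $a^+,a^-\neq 0$, so that the two vector pairs are honest bases and the divisions defining the coefficients are legitimate.
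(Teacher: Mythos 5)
Your proposal is correct and follows essentially the same route as the paper: expand $v'(i+\tfrac{1}{2})$ in each basis and identify the coefficients by bracketing against $\gamma'(i+\tfrac{1}{2})$ and against $v(i)$ (resp.\ $v(i+1)$), invoking items (1) and (3) of the preceding lemma. Your sign bookkeeping is right, and your explicit remark that convexity guarantees $a^{\pm}\neq 0$ (so the pairs really are bases and the divisions are legitimate) is a small point the paper leaves tacit.
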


\begin{proof}
Observe that
$$
a^+(i+\tfrac{1}{2})\alpha_1=[\gamma'(i+\tfrac{1}{2}),v'(i+\tfrac{1}{2})]=[\gamma'(i+\tfrac{1}{2}),\gamma'(i+n+\tfrac{1}{2})]=\delta(i+\tfrac{1}{2}).
$$
Moreover
$$
a^+(i+\tfrac{1}{2})\beta_1=[v'(i+\tfrac{1}{2}),v(i)]=-a(i+\tfrac{1}{2}).
$$
The proofs for $\alpha_2$ and $\beta_2$ are analogous.
\end{proof}

\section{Envelopes of bisecting lines}

\subsection{ The polygons $\mathcal{E}$, $\mathcal{M}$ and $\mathcal{H}$}

For a half-area polygon $\gamma$, the equation of the bisecting line passing through $\gamma(i)$ is given by $F=0$, where
$$
F(x,y,i)=[(x,y)-\gamma(i),v(i)]=0.
$$
The discrete derivative with respect to the variable $i$ is given by
$$
F'(x,y,i+\tfrac{1}{2})=-[\gamma'(i+\tfrac{1}{2}),v(i)]+[(x,y)-\gamma(i+1),v'(i+\tfrac{1}{2})],
$$
or equivalently,
$$
F'(x,y,i+\tfrac{1}{2})=-[\gamma'(i+\tfrac{1}{2}),v(i+1)]+[(x,y)-\gamma(i),v'(i+\tfrac{1}{2})].
$$

A point $(x,y)$ of the discrete envelope satisfies 
$$
(x,y)=\gamma(i)+\mu v(i)=\gamma(i+1)+\lambda v(i+1)
$$
which implies that
$$
\mu=\frac{a^-(i+\tfrac{1}{2})}{a(i+\tfrac{1}{2})}, \ \ \lambda=\frac{a^+(i+\tfrac{1}{2})}{a(i+\tfrac{1}{2})}.
$$
Thus we write
\begin{equation}\label{eq:DefineE}
{\mathcal E}(i+\tfrac{1}{2})=\gamma(i)+\frac{a^-(i+\tfrac{1}{2})}{a(i+\tfrac{1}{2})} v(i)=\gamma(i+1)+\frac{a^+(i+\tfrac{1}{2})}{a(i+\tfrac{1}{2})} v(i+1).
\end{equation}
Geometrically, $\E(i+\tfrac{1}{2})$ is the intersection of the bisecting lines $l(i)$ and $l(i+1)$ (Figure \ref{fig:Evolutes}, red).
We denote by $\E(i)$ the segment connecting $\E(i-\tfrac{1}{2})$ and $\E(i+\tfrac{1}{2})$. The polygon $\E$ will be called the {\it discrete envelope} of $\gamma$.

In the case of smooth curves, the bisecting lines envelope coincides with the set of mid-points of bisecting chords. For half-area polygons, $\E$ does not coincide with $\M$, the polygon whose vertices  
\begin{equation*}
{\mathcal M}(i)=\frac{1}{2}\left( \gamma(i)+\gamma(i+n) \right)
\end{equation*}
are exactly the mid-points of the principal chords (Figure \ref{fig:Evolutes}, blue). The polygon $\M$ will be called the {\it mid-points envelope} of $\gamma$ and we denote by $\M(i+\tfrac{1}{2})$ the segment connecting $\M(i)$ and $\M(i+1)$. 

The envelope of the bisecting lines of a half-area polygon passing through any point at the sides of the polygon, not just the vertices, will be denoted $\H$. It is a concatenation of hyperbolic arcs ${\mathcal H}(i+\tfrac{1}{2})$ passing through $\M(i)$ and $\M(i+1)$, tangent to
$l(i)$ and $l(i+1)$ at these points, and asymptotic to the support lines of the sides ${\gamma(i+\tfrac{1}{2})}$ and ${\gamma(i+n+\tfrac{1}{2})}$ (Figure \ref{fig:Evolutes}, green). We shall call $\H$ the {\it hyperbolic envelope} of $\gamma$.

\begin{figure}[htb]
 \centering
 \includegraphics[width=0.80\linewidth]{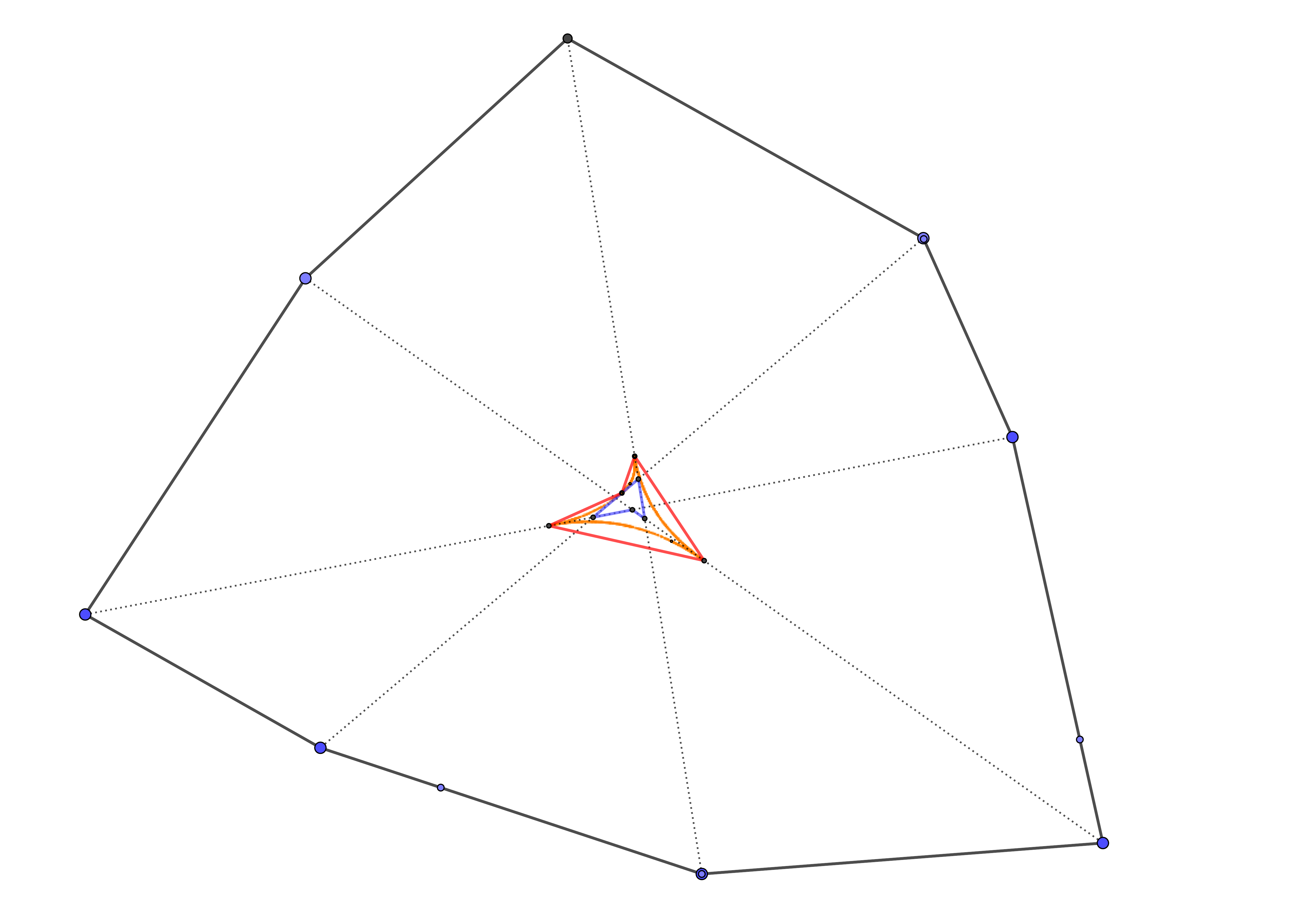}
 \caption{ Discrete envelope $\E$ in blue, mid-points envelope $\M$ in red and the hyperbolic envelope $\H$ in orange. }
\label{fig:Evolutes}
\end{figure}

\subsection{Symmetric and odd-symmetric polygons}

A half-area polygon is {\it symmetric} with respect to a point $O$ if $O$ is the midpoint of the principal diagonals $\gamma(i)\gamma(i+n)$, for every $1\leq i\leq 2n$. It is clear that the symmetry condition is equivalent to $\M=O$ and also to $\H=O$. Moreover, it is also equivalent to the side $\gamma(i)\gamma(i+1)$ being parallel to the side $\gamma(i+n)\gamma(i+n+1)$, or equivalently, $\delta(i+\tfrac{1}{2})=0$, for every $1\leq i\leq 2n$.

The discrete envelope $\E$ of a symmetric polygon reduces to a point, but the converse is not true. There exist non-symmetric half-area polygons whose discrete envelope is just a point (see Figure \ref{fig:HexagonSkewSymmetric}). We shall see below that they occur only for $n$ odd. 

Denote $\lambda(i+\tfrac{1}{2})$ the ratio of the lengths of the central diagonals $\gamma(i+1)\gamma(i+n)$ and 
$\gamma(i)\gamma(i+n+1)$. We say that a half-area polygon is {\it odd-symmetric} if $\lambda(i+\tfrac{1}{2})=c$, $i$ odd, and $\lambda(i+\tfrac{1}{2})=\frac{1}{c}$, $i$ even, for some constant $c$. Of course, an odd-symmetric polygon with $c=1$ is in fact symmetric. Another obvious remark is that, if $c\neq 1$, odd-symmetric polygons exist only for $n$ odd.

\begin{lemma}
$\E(i-\tfrac{1}{2})=\E(i+\tfrac{1}{2})$ if and only if $\lambda(i-\tfrac{1}{2})\cdot\lambda(i+\tfrac{1}{2})=1$.
\end{lemma}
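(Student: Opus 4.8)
The plan is to exploit that both $\E(i-\tfrac12)$ and $\E(i+\tfrac12)$ are intersections of adjacent bisecting lines that share the line $l(i)$: indeed $\E(i+\tfrac12)=l(i)\cap l(i+1)$ and $\E(i-\tfrac12)=l(i-1)\cap l(i)$, so both lie on $l(i)$, the line through $\gamma(i)$ in the direction $v(i)$. Hence I would parametrize points of $l(i)$ as $\gamma(i)+t\,v(i)$ and just compare the two parameter values. From the two forms of \eqref{eq:DefineE} one reads off
$$
\E(i+\tfrac12)=\gamma(i)+\frac{a^-(i+\tfrac12)}{a(i+\tfrac12)}\,v(i),\qquad \E(i-\tfrac12)=\gamma(i)+\frac{a^+(i-\tfrac12)}{a(i-\tfrac12)}\,v(i),
$$
the second obtained by replacing $i$ with $i-1$ in the second expression of \eqref{eq:DefineE}. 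Since $v(i)\neq 0$, the two vertices coincide precisely when $\frac{a^-(i+\tfrac12)}{a(i+\tfrac12)}=\frac{a^+(i-\tfrac12)}{a(i-\tfrac12)}$.

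The key step, and the one I expect to require the most care, is to identify the length ratio $\lambda$ with a ratio of the area quantities $a^{\pm}$. Writing $d_1=\gamma(i+n)-\gamma(i+1)$ and $d_2=\gamma(i+n+1)-\gamma(i)$ for the directions of the two central diagonals attached to the side $\gamma(i+\tfrac12)$, a short computation gives $d_1=v(i)-\gamma'(i+\tfrac12)$ and $d_2=v(i+1)+\gamma'(i+\tfrac12)$, so that $[\gamma'(i+\tfrac12),d_1]=a^+(i+\tfrac12)$ and $[\gamma'(i+\tfrac12),d_2]=a^-(i+\tfrac12)$. Because $\gamma$ is half-area these diagonals are parallel, so $d_1=\kappa\,d_2$ for a scalar $\kappa$; comparing the two determinants yields $\kappa=a^+(i+\tfrac12)/a^-(i+\tfrac12)$, and convexity forces both determinants to be positive, whence
$$
\lambda(i+\tfrac12)=\frac{|d_1|}{|d_2|}=\frac{a^+(i+\tfrac12)}{a^-(i+\tfrac12)}.
$$
The delicate point here is pinning down orientations so that the length ratio equals the unsigned ratio of determinants rather than its reciprocal; the parallelism of the central diagonals, i.e.\ the half-area hypothesis, is exactly what makes this clean.

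Finally I would substitute this identity into the coincidence condition. Using $a=a^++a^-$ and dividing through by $a^-$ in each fraction turns the two parameter values into $t_+=1/(\lambda(i+\tfrac12)+1)$ and $t_-=\lambda(i-\tfrac12)/(\lambda(i-\tfrac12)+1)$. Setting $t_+=t_-$ and cross-multiplying collapses, after cancelling the common term $\lambda(i-\tfrac12)$, to the single relation $\lambda(i-\tfrac12)\,\lambda(i+\tfrac12)=1$, which is the asserted equivalence. This last paragraph is purely algebraic and routine once the previous identification is in hand.
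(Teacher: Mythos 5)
Your proof is correct and follows essentially the same route as the paper: both arguments reduce the coincidence of $\E(i-\tfrac{1}{2})$ and $\E(i+\tfrac{1}{2})$ to the equality of the ratios in which these two points divide the common principal diagonal $l(i)$, and identify those ratios with $\lambda(i-\tfrac{1}{2})$ and $\lambda^{-1}(i+\tfrac{1}{2})$ using the parallelism of the central diagonals. The only difference is that you establish the key identification $\lambda(i+\tfrac{1}{2})=a^+(i+\tfrac{1}{2})/a^-(i+\tfrac{1}{2})$ by an explicit determinant computation from Equation \eqref{eq:DefineE}, where the paper asserts it directly from the similar-triangle picture.
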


\begin{proof}
Each principal diagonal $l(i)$ and $l(i+1)$
is divided by $\E(i+\tfrac{1}{2})$ into two segments whose ratios are exactly $\lambda(i+\tfrac{1}{2})$ and $\lambda^{-1}(i+\tfrac{1}{2})$. Similarly the principal diagonals $l(i-1)$ and $l(i)$ are divided by $\E(i-\tfrac{1}{2})$ in segments whose ratios are $\lambda(i-\tfrac{1}{2})$ and $\lambda^{-1}(i-\tfrac{1}{2})$. But $\E(i-\tfrac{1}{2})$ and $\E(i+\tfrac{1}{2})$ divide $l(i)$ in segments with the same ratio if and only if they coincide. We conclude that $\E(i-\tfrac{1}{2})=\E(i+\tfrac{1}{2})$ if and only if $\lambda^{-1}(i-\tfrac{1}{2})=\lambda(i+\tfrac{1}{2})$, thus proving the lemma.
\end{proof}

\begin{corollary}
A half-area polygon is odd-symmetric if and only if its discrete envelope reduces to a point. 
\end{corollary}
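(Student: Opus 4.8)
The plan is to derive the corollary directly from the lemma that immediately precedes it, so almost no new geometry is needed. First I would observe that the discrete envelope $\E$ is a closed polygon, so it degenerates to a single point exactly when all of its vertices coincide, and for a closed cycle this happens if and only if every pair of consecutive vertices agree, i.e. $\E(i-\tfrac{1}{2})=\E(i+\tfrac{1}{2})$ for every $i$. Applying the preceding lemma termwise then converts this into a purely metric condition on the diagonal ratios: $\E$ reduces to a point if and only if $\lambda(i-\tfrac{1}{2})\,\lambda(i+\tfrac{1}{2})=1$ for all $i$, that is, the product of every two consecutive values of $\lambda$ equals one.

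It then remains to identify this product condition with skew-symmetry. For the forward implication I would start from a skew-symmetric polygon, where $\lambda(i+\tfrac{1}{2})$ alternates between $c$ and $1/c$ according to the parity of $i$; since $i-\tfrac{1}{2}$ and $i+\tfrac{1}{2}$ correspond to indices of opposite parity, their product is $c\cdot\tfrac{1}{c}=1$, and the lemma gives $\E(i-\tfrac{1}{2})=\E(i+\tfrac{1}{2})$ for all $i$, so $\E$ collapses to a point. For the converse I would rewrite the product relation as $\lambda(i+\tfrac{1}{2})=1/\lambda(i-\tfrac{1}{2})$ and iterate it once to get $\lambda(i+\tfrac{1}{2})=\lambda(i-\tfrac{3}{2})$; hence $\lambda$ is a function of the parity of $i$ alone. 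Setting $c$ to be the common value on odd indices, the relation forces the value $1/c$ on even indices, which is precisely the definition of a skew-symmetric polygon.

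The only point I expect to require care is the consistency of this period-two alternation when wrapped around the full cycle of indices: one must check that the recursion closes without contradiction. This is where the standing hypothesis of an even number $2n$ of vertices does the work, since the parity of $i$ is then well defined modulo $2n$ and the alternating pattern $c,\tfrac{1}{c},c,\tfrac{1}{c},\dots$ matches up after a complete loop. Apart from this bookkeeping the argument is immediate, the substance having already been absorbed into the preceding lemma.
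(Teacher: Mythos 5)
Your argument is correct and is exactly the derivation the paper intends: the corollary is stated without proof as an immediate consequence of the preceding lemma, and your termwise application of that lemma, together with the period-two argument for $\lambda$ and the parity bookkeeping modulo $2n$, supplies precisely the missing details.
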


\begin{figure}[htb]
 \centering
 \includegraphics[width=0.80\linewidth]{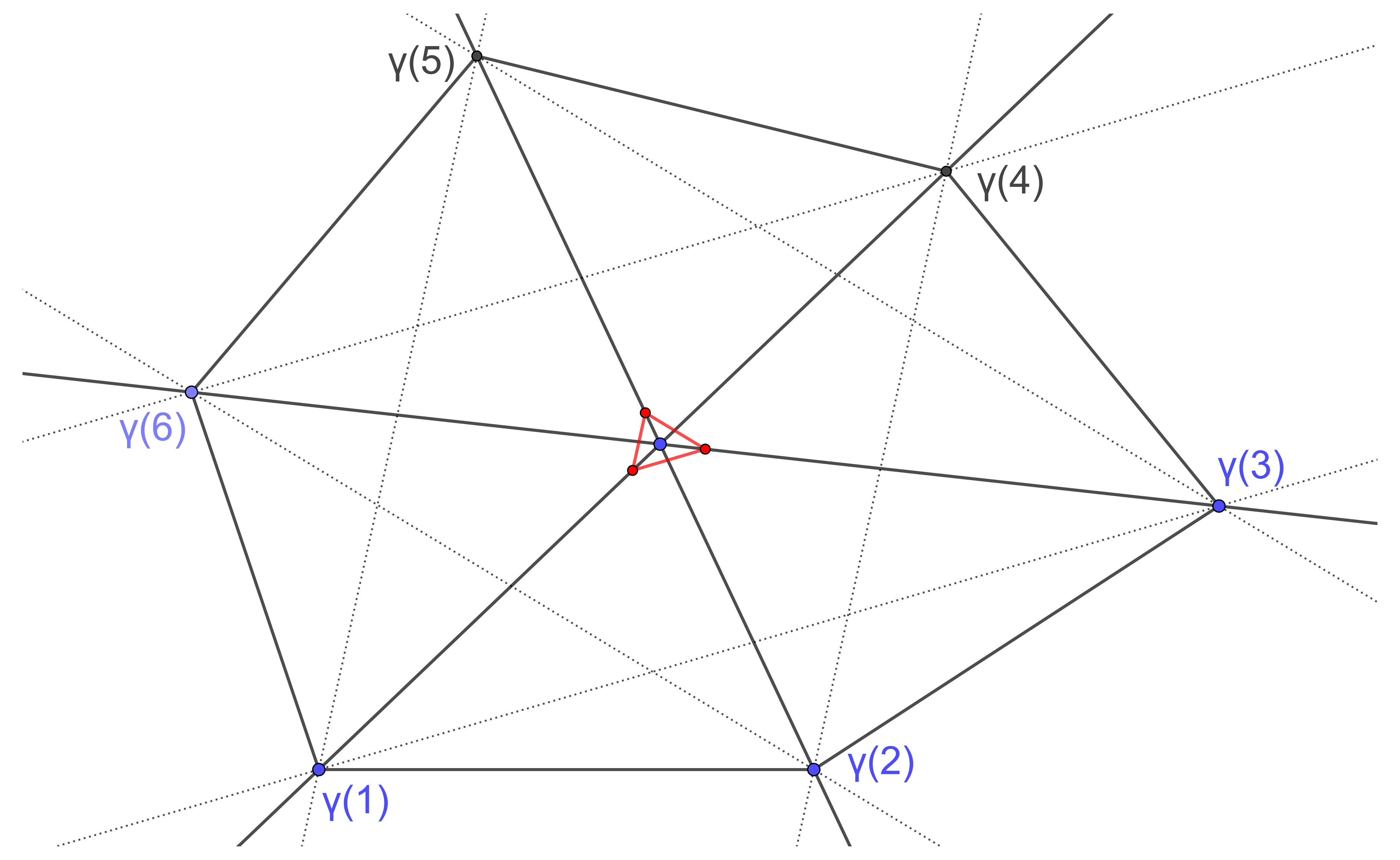}
 \caption{ An odd-symmetric hexagon. Note that the discrete envelope is a point, while the mid-points envelope is a triangle (red).}
\label{fig:HexagonSkewSymmetric}
\end{figure}

\subsection{Half-area vertices and cusps}

In the smooth case, a half-area vertex is a point such that $\alpha=0$. For half-area polygons, we say that $\gamma(i)$ is a {\it half-area vertex} of $\gamma$ if $\alpha_1$ (or $\alpha_2$) changes sign at $i$. Equivalently, we say that $\gamma(i)$ is a half-area vertex if 
\begin{equation*}
\delta(i-\tfrac{1}{2})\cdot\delta(i+\tfrac{1}{2})<0.
\end{equation*}

\begin{lemma}
The midpoint $\M(i)$ is outside the segment $\E(i)$ if and only if $\gamma(i)$ is a half-area vertex of $\gamma$. 
\end{lemma}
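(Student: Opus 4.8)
The plan is to exploit a collinearity that makes the statement essentially one-dimensional. The three points in question, $\E(i-\tfrac{1}{2})$, $\M(i)$ and $\E(i+\tfrac{1}{2})$, all lie on the single line $l(i)$ through $\gamma(i)$ and $\gamma(i+n)$: indeed $\M(i)=\tfrac12(\gamma(i)+\gamma(i+n))$ is the midpoint of the principal chord, hence lies on $l(i)$; and $\E(i+\tfrac{1}{2})$ (resp. $\E(i-\tfrac{1}{2})$) is by definition the intersection of $l(i)$ with $l(i+1)$ (resp. $l(i-1)$), so each lies on $l(i)$ as well. Therefore the question ``is $\M(i)$ outside the segment $\E(i)$?'' is just the question of whether the parameter value of $\M(i)$ lies outside the interval cut out by the parameter values of the two endpoints $\E(i\pm\tfrac12)$, all measured along $l(i)$.

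First I would parametrize $l(i)$ by $p(t)=\gamma(i)+t\,v(i)$, so that $\gamma(i)$, $\M(i)$, $\gamma(i+n)$ correspond to $t=0,\tfrac12,1$ respectively. Reading off the two expressions in \eqref{eq:DefineE}, the vertex $\E(i+\tfrac12)$ sits at $t_+=\dfrac{a^-(i+\tfrac12)}{a(i+\tfrac12)}$, and shifting the index $i\mapsto i-1$ in the \emph{second} equality of \eqref{eq:DefineE} writes $\E(i-\tfrac12)=\gamma(i)+\dfrac{a^+(i-\tfrac12)}{a(i-\tfrac12)}v(i)$, so it sits at $t_-=\dfrac{a^+(i-\tfrac12)}{a(i-\tfrac12)}$. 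The midpoint is at $t_M=\tfrac12$.

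Next, $\M(i)$ lies outside the segment exactly when $(t_M-t_-)(t_M-t_+)>0$. I would compute each factor using $a=a^++a^-$ together with item (2) of the Basic Equations lemma, $a^-(i+\tfrac12)-a^+(i+\tfrac12)=\delta(i+\tfrac12)$. A short calculation gives
\[
\tfrac12-t_+=\frac{a^+(i+\tfrac12)-a^-(i+\tfrac12)}{2a(i+\tfrac12)}=\frac{-\delta(i+\tfrac12)}{2a(i+\tfrac12)},\qquad
\tfrac12-t_-=\frac{a^-(i-\tfrac12)-a^+(i-\tfrac12)}{2a(i-\tfrac12)}=\frac{\delta(i-\tfrac12)}{2a(i-\tfrac12)}.
\]
Multiplying, and using that $a^{\pm}$ are (twice) triangle areas and hence $a(i\pm\tfrac12)>0$ for a convex polygon, the product $(t_M-t_-)(t_M-t_+)$ has the sign of $-\,\delta(i-\tfrac12)\,\delta(i+\tfrac12)$. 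Thus $\M(i)$ is outside $\E(i)$ iff $\delta(i-\tfrac12)\,\delta(i+\tfrac12)<0$, which is precisely the definition of $\gamma(i)$ being a pt-vertex.

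The only genuinely delicate point is the bookkeeping in the second paragraph: one must use the correct one of the two forms in \eqref{eq:DefineE} (with the index shifted by one) to express $\E(i-\tfrac12)$ along the direction $v(i)$ rather than $v(i-1)$, and then keep the signs straight when invoking $a^--a^+=\delta$. The positivity $a(i\pm\tfrac12)>0$, needed to pass from the sign of the product to the sign of $\delta(i-\tfrac12)\delta(i+\tfrac12)$, should be recorded explicitly (it is a consequence of convexity). Everything else is a direct substitution.
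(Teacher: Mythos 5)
Your proof is correct and follows essentially the same route as the paper: both arguments locate $\E(i-\tfrac12)$, $\M(i)$, $\E(i+\tfrac12)$ on the line $l(i)$ via the ratios $a^{\pm}/a$ from \eqref{eq:DefineE}, reduce each side of the comparison to the sign of one $\delta$ using $a^{-}-a^{+}=\delta$, and combine into the product condition $\delta(i-\tfrac12)\delta(i+\tfrac12)<0$. Your direct affine parametrization $t\mapsto\gamma(i)+t\,v(i)$ is in fact a cleaner bookkeeping than the paper's determinant inequalities, and you are right to record explicitly the positivity $a(i\pm\tfrac12)>0$, which the paper uses tacitly.
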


\begin{proof}
Geometrically, one can observe that $\M(i)$ belongs to the segment ${\gamma(i)\E(i+\tfrac{1}{2})}$ if and only if
$$
\frac{a^-(i+\tfrac{1}{2})}{2}< \left[\gamma(i)-\E(i+\tfrac{1}{2}), \gamma(i+1)-\E(i+\tfrac{1}{2}) \right].
$$
From Equation \eqref{eq:DefineE}, this is equivalent to 
$$
\frac{a^-(i+\tfrac{1}{2})}{2}<\frac{a^-(i+\frac{1}{2})a^+(i+\frac{1}{2})}{a^2(i+\tfrac{1}{2})}\left[  v(i), v(i+1) \right]
$$
which reduces to $2a^+(i+\frac{1}{2})>a(i+\tfrac{1}{2})$, or equivalently, $\delta(i+\tfrac{1}{2})>0$.

Similarly, one can show that $\M(i)$ belongs to the segment ${\gamma(i)\E(i-\tfrac{1}{2})}$ if and only if
 $\delta(i-\tfrac{1}{2})<0$. We conclude that $\M(i)$ belongs to the segment ${\E(i-\tfrac{1}{2})\E(i+\tfrac{1}{2})}$ if and only if $\delta(i-\tfrac{1}{2})\cdot \delta(i+\tfrac{1}{2})>0$, thus proving the lemma.
\end{proof}

We say that $\M(i)$ is a {\it cusp} of $\M$  if the line $l(i)$ separates the segments $\M(i-\tfrac{1}{2})$ and $\M(i+\tfrac{1}{2})$. Similarly, we say that $\H(i)$ is a {\it cusp} of $\M$  if the line $l(i)$ separates the hyperbolic arcs $\H(i-\tfrac{1}{2})$ and $\H(i+\tfrac{1}{2})$. 

\begin{corollary}
The following statements are equivalent:

\begin{enumerate}
\item 
$\gamma(i)$ is a half-area vertex of $\gamma$,

\item
The midpoint $\M(i)$ is outside the segment $\E(i)$.

\item
$\M(i)$ is a cusp of $\M$.

\item
$\H(i)$ is a cusp of $\H$. 

\end{enumerate}

\end{corollary}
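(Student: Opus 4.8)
The plan is to prove the corollary by establishing a chain of equivalences, most of which are already in hand. The equivalence of the first two items, namely that $\gamma(i)$ is a pt-vertex if and only if $\M(i)$ lies outside the segment $\E(i)$, is precisely the content of the preceding lemma: the lemma shows $\M(i)$ belongs to $\E(i)$ exactly when $\delta(i-\tfrac{1}{2})\cdot\delta(i+\tfrac{1}{2})>0$, so $\M(i)$ is outside $\E(i)$ exactly when $\delta(i-\tfrac{1}{2})\cdot\delta(i+\tfrac{1}{2})<0$, which is by definition the pt-vertex condition. Thus the first two statements are equivalent with no further work, and I would simply cite the lemma.

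The heart of the proof is connecting these algebraic/positional conditions to the geometric cusp conditions in items (3) and (4). First I would recall the definition of a cusp of $\M$: $\M(i)$ is a cusp when the line $l(i)$ separates the two segments $\M(i-\tfrac{1}{2})$ and $\M(i+\tfrac{1}{2})$. The key observation is that both segments emanate from the common point $\M(i)$, which lies on $l(i)$ (since $\M(i)$ is the midpoint of the principal chord through $\gamma(i)$, and the bisecting line $l(i)$ passes through that chord). So the question of separation reduces to whether the two neighboring midpoints $\M(i-1)$ and $\M(i+1)$ lie on the same side of $l(i)$ or on opposite sides. The plan is to show that $\M(i-1)$ and $\M(i+1)$ lie on opposite sides of $l(i)$ precisely when $\M(i)$ fails to lie between $\E(i-\tfrac{1}{2})$ and $\E(i+\tfrac{1}{2})$ on the envelope, i.e.\ exactly the condition from item (2). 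Concretely, I would compute the signed quantities $[\M(i\pm 1)-\M(i), v(i)]$, which measure the side of $l(i)$ on which the adjacent midpoints sit, and relate their signs to $\delta(i\pm\tfrac{1}{2})$ using the basic equations and the formula $\M(i)=\tfrac{1}{2}(\gamma(i)+\gamma(i+n))$. Since $\M'(i+\tfrac{1}{2})=\tfrac{1}{2}(\gamma'(i+\tfrac{1}{2})+\gamma'(i+n+\tfrac{1}{2}))$, pairing this with $v(i)$ should produce an expression whose sign is governed by $\delta$, yielding the claimed equivalence with item (2).

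For item (4), the hyperbolic envelope $\H$, the argument runs in parallel. Here I would use the established facts that the arc $\H(i+\tfrac{1}{2})$ passes through $\M(i)$ and $\M(i+1)$ and is tangent to $l(i)$ at $\M(i)$ and to $l(i+1)$ at $\M(i+1)$. Because $\H(i-\tfrac{1}{2})$ and $\H(i+\tfrac{1}{2})$ are both tangent to $l(i)$ at the common point $\M(i)$, the line $l(i)$ separates the two arcs exactly when the arcs bend to opposite sides of $l(i)$ near $\M(i)$, which is a second-order (convexity) condition at the contact point. I would argue that this second-order condition is detected by the same sign change in $\delta$ at $i$, so that $\H(i)$ is a cusp under identical circumstances as $\M(i)$. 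Intuitively the arcs, being asymptotic to the fixed support lines of the opposite sides, follow the same turning behavior as the chords of $\M$, so the separation criterion transfers.

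I expect the main obstacle to be item (4), the hyperbolic arc case, since it requires care about the local geometry of each arc at its endpoint of tangency rather than a simple signed-area computation. In particular, one must verify that tangency of both arcs to $l(i)$ at $\M(i)$ reduces the separation question to the relative orientation of the arcs' second-order terms, and then match that orientation to the sign of $\delta$. The midpoint case (items (1)--(3)) should be routine bilinear algebra built on the basic equations and the preceding lemma, but translating the smooth-style curvature reasoning into the discrete hyperbolic setting is where the real verification lies.
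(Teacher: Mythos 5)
Your proposal is correct and follows the route the paper intends: the paper gives no explicit proof of this corollary, obtaining (1)$\Leftrightarrow$(2) from the preceding lemma and treating (3) and (4) as direct consequences of the definitions of cusp together with the fact that the arcs of $\H$ are tangent to $l(i)$ at $\M(i)$ and end at $\M(i\pm1)$. Your anticipated sign computation does work out, namely $[\M(i\pm 1)-\M(i),v(i)]=-\tfrac{1}{2}\delta(i\pm\tfrac{1}{2})$ using $a^{-}-a^{+}=\delta$ and $v(i+n)=-v(i)$, so the separation criterion for both $\M$ and $\H$ reduces to $\delta(i-\tfrac{1}{2})\cdot\delta(i+\tfrac{1}{2})<0$ as claimed.
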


\begin{corollary}
An odd-symmetric polygon has exactly $n$ cusps. 
\end{corollary}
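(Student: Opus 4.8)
The plan is to show that skew-symmetry forces $\delta(i+\tfrac12)$ to be nonzero and to change sign at every index, so that every vertex of $\gamma$ is a pt-vertex; since $\M$ and $\H$ are $n$-gons, each of their $n$ vertices will then be a cusp. Throughout I assume $c\neq 1$ (if $c=1$ the polygon is symmetric and $\M,\H$ collapse to a point, so there is nothing to count); recall that then, by the remark above, $n$ is odd.

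First I would relate the sign of $\delta$ to the ratio $\lambda$. By the second item of the basic-equations Lemma, $\delta(i+\tfrac12)=a^-(i+\tfrac12)-a^+(i+\tfrac12)$, and both $a^{\pm}$ are positive since $\gamma$ is convex. On the other hand, Equation \eqref{eq:DefineE} shows that $\E(i+\tfrac12)$ cuts the principal diagonal $l(i)$, measured from $\gamma(i)$ toward $\gamma(i+n)$, in the ratio $a^-(i+\tfrac12):a^+(i+\tfrac12)$; as noted in the proof of the Lemma characterizing $\E(i-\tfrac12)=\E(i+\tfrac12)$, this division ratio is $\lambda^{-1}(i+\tfrac12)$, so that $\lambda(i+\tfrac12)=a^+(i+\tfrac12)/a^-(i+\tfrac12)$. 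Hence
$$
\delta(i+\tfrac12)=a^-(i+\tfrac12)\bigl(1-\lambda(i+\tfrac12)\bigr),
$$
and, because $a^-(i+\tfrac12)>0$, the sign of $\delta(i+\tfrac12)$ equals the sign of $1-\lambda(i+\tfrac12)$.

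Now I invoke skew-symmetry: $\lambda(i+\tfrac12)=c$ for $i$ odd and $\lambda(i+\tfrac12)=\tfrac1c$ for $i$ even. Since $c\neq 1$, exactly one of $c,\tfrac1c$ exceeds $1$, so $1-\lambda(i+\tfrac12)$ is nonzero and has one sign for odd $i$ and the opposite sign for even $i$. Therefore $\delta$ is everywhere nonzero and strictly alternates in sign along the $2n$ sides, whence $\delta(i-\tfrac12)\cdot\delta(i+\tfrac12)<0$ for every $i$. By definition, every vertex $\gamma(i)$ is then a pt-vertex of $\gamma$.

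Finally, by the preceding Corollary, $\gamma(i)$ being a pt-vertex is equivalent to $\M(i)$ being a cusp of $\M$ and to $\H(i)$ being a cusp of $\H$. As $\M$ and $\H$ are $n$-gons, with vertices indexed by $i=1,\dots,n$, and as $\gamma(1),\dots,\gamma(n)$ are all pt-vertices, all $n$ of their vertices are cusps; thus $\M$ and $\H$ have exactly $n$ cusps. The step I expect to be most delicate is the identification $\lambda(i+\tfrac12)=a^+(i+\tfrac12)/a^-(i+\tfrac12)$, i.e. keeping track of which of $a^{\pm}$ sits on the $\gamma(i)$ side of $l(i)$; reversing the roles would flip the sign convention but not the alternation, and indeed all that the argument truly needs is that $\lambda\neq 1$ makes $\delta\neq 0$ with a sign that switches between odd and even $i$.
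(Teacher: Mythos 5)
Your proof is correct, and since the paper states this corollary without proof, your argument supplies exactly the intended justification: it reduces the claim to the preceding four-way equivalence (pt-vertex $\Leftrightarrow$ cusp of $\M$ $\Leftrightarrow$ cusp of $\H$) after checking that skew-symmetry with $c\neq 1$ forces $\delta$ to be nonzero with alternating sign, via the identity $\lambda(i+\tfrac12)=a^+(i+\tfrac12)/a^-(i+\tfrac12)$, which indeed follows from the similar triangles cut off by the parallel central diagonals. The only cosmetic remark is that one could argue marginally faster from the corollary that the discrete envelope of a skew-symmetric polygon is a single point, so each degenerate segment $\E(i)$ fails to contain $\M(i)$ precisely because $\delta(i\pm\tfrac12)\neq 0$; this is the same computation you perform.
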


\subsection{Examples with maximal number of cusps}

For $n$ odd, the odd-symmetric polygons are examples of half-area polygons with maximal number $n$ of cusps. 
For $n$ even, there are not half-area polygons with $n$ cusps, since, as we shall see below, the number of cusps of a half-area polygon is always odd. In this section we shall construct $2n$-half-area polygons, $n$ even, with $(n-1)$ cusps.

Starting from a odd-symmetric $2(n-1)$-half-area polygon $\gamma$, we shall describe how to add two more vertices $P$ and $Q$ to $\gamma$ and obtain a $2n$-half-area polygon with $2(n-1)$ cusps (see Figure \ref{fig:MaximalCusps1}). We shall denote by $\gamma\cup\{P,Q\}$ the polygon obtained from $\gamma$ by adding the vertices $P$ and $Q$, $P$ between $\gamma(2n-2)$ and $\gamma(1)$, $Q$ between $\gamma(n-1)$ and $\gamma(n)$.

The following lemma describes some elementary properties of a trapezoid:

\begin{lemma}\label{lemma:Trapezoidal}
Let $ABCD$ be a trapezoid with $AB$ and $CD$ parallel. For any point $P$ in the plane, denote by $Q=Q(P)$ the intersection of the lines parallel to $PD$ through $B$ and parallel to $PA$ through $C$. 
\begin{enumerate}

\item
$Q$ is on the left (resp. at, resp. on the right)
of the line through $AD$ if and only if $P$ is on the right (resp. at, resp. on the left) of the line through $BC$ (Figure \ref{fig:Trapezoids-Diagonals}). 

\item
There is exactly one point $P$ at the segment $BC$
for which $Q$ is at the segment $AD$ and $PQ$ is parallel to $AB$.

\end{enumerate}
\end{lemma}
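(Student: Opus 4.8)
The plan is to introduce affine coordinates adapted to the parallel sides and reduce both statements to a single exact determinant identity. Since affine maps preserve parallelism, incidence with segments, and the sign of $2\times2$ determinants (up to one global orientation that rescales all the brackets equally), I may normalize $A=(0,0)$, $B=(1,0)$ on the line $y=0$ and $D=(d,1)$, $C=(c,1)$ on the line $y=1$, with $c>d$ by convexity of $ABCD$. Write $P=(\xi,\eta)$. The target of the first part is the identity
\[
[\,D-A,\ Q-A\,]=-[\,C-B,\ P-B\,],
\]
which says the signed areas of the triangles $ADQ$ and $BCP$ are exact negatives. Once it is in hand, statement (1) is immediate: $Q$ lies left of, on, or right of line $AD$ exactly according to the sign of $[D-A,Q-A]$, and the minus sign converts this into the opposite position of $P$ relative to line $BC$.

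To obtain the identity I would encode the two defining conditions $BQ\parallel PD$ and $CQ\parallel PA$ as the vanishing of $[\,Q-B,\,D-P\,]$ and $[\,Q-C,\,A-P\,]$. For fixed $P$ these are two linear equations in $Q$, solved by Cramer's rule; the coefficient determinant works out to $\Delta=\xi-d\eta$, which vanishes precisely when $P$ lies on line $AD$ --- the degenerate case where the two lines defining $Q$ become parallel and $Q$ escapes to infinity, and which does not occur in the region of interest. Substituting the resulting $Q$ into $[D-A,Q-A]$, the numerator factors as $\Delta\cdot\bigl(\xi-1+\eta(1-c)\bigr)$, so the $\Delta$'s cancel and leave $[D-A,Q-A]=\xi-1+\eta(1-c)=-[C-B,P-B]$. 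This factorisation is the one genuine computation, and checking that it closes up cleanly is the main obstacle; it can be verified on the square $c=1,\,d=0$, where $Q=P$ at the centre. A coordinate-free route is also available: from $BQ\parallel PD$ and $CQ\parallel PA$, comparing heights to these lines gives $[\,D-P,Q-P\,]=[\,D-P,B-P\,]$ and $[\,A-P,Q-P\,]=[\,A-P,C-P\,]$, and feeding these two equal-area relations together with $[\,B-A,\,D-C\,]=0$ (the hypothesis $AB\parallel CD$) into the expansion of $[D-A,Q-A]$ should reproduce the identity without coordinates.

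For the second part I would exploit that, by part (1), any $P$ on the \emph{line} $BC$ forces $Q$ onto the line $AD$. Parametrise $P=B+\tau(C-B)$ and $Q=A+\sigma(D-A)$; since both legs run from $y=0$ to $y=1$, the parameters coincide with the heights, so $PQ\parallel AB$ is equivalent to $\sigma=\tau$. Solving the linear system expresses $\sigma$ as a rational function of $\tau$ with $\sigma(0)=1$ and $\sigma(1)=0$, and clearing the denominator turns $\sigma=\tau$ into the quadratic
\[
f(\tau)=(c-1-d)\,\tau^{2}+2\tau-1=0 .
\]
Here $f(0)=-1<0$ while $f(1)=c-d>0$ because $ABCD$ is a genuine convex trapezoid, so $f$ changes sign on $(0,1)$ and, being quadratic, has exactly one root there; this gives the unique admissible $P$. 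For that root $\sigma=\tau\in(0,1)$, which places $Q$ in the interior of $AD$, and the denominator $1+(c-1-d)\tau$ cannot vanish at the root (it would force $\tau=1$, excluded since $c\neq d$). Hence there is exactly one such $P$, completing the proof. The only delicate points are recording the correct orientation so that ``left/right'' matches the sign of the brackets, and invoking convexity to pin down $f(1)>0$.
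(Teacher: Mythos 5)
Your proposal is correct and follows essentially the same route as the paper: normalize by an affine map so that $A,B$ lie on $y=0$ and $C,D$ on $y=1$, compute $Q$ explicitly, read off part (1) from a sign comparison, and reduce part (2) to a quadratic in the height parameter with exactly one root in $(0,1)$. The only differences are cosmetic --- you package part (1) as the identity $[D-A,Q-A]=-[C-B,P-B]$ (which does factor as you claim, with $\Delta=\xi-d\eta$ cancelling) and settle part (2) by the sign change $f(0)<0<f(1)$, whereas the paper further normalizes $D=(0,1)$, $C=(c^2,1)$ and solves the quadratic explicitly to get $y_0=\tfrac{1}{1+c}$.
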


\begin{figure}[htb]
 \centering
 \includegraphics[width=0.70\linewidth]{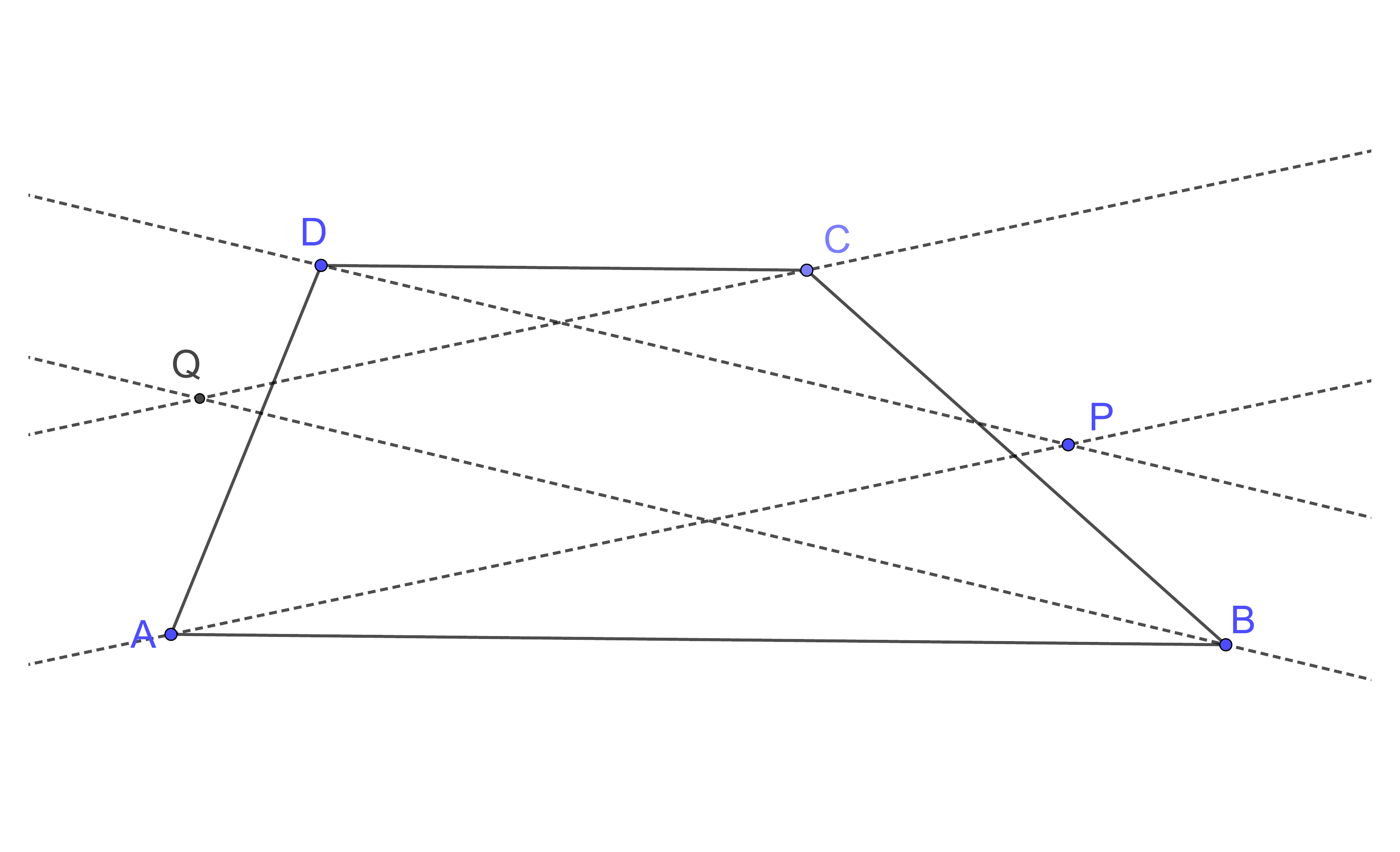}
 \caption{ Illustration of Lemma \ref{lemma:Trapezoidal}.}
\label{fig:Trapezoids-Diagonals}
\end{figure}

\begin{proof}
By an affine change of coordinates, we may assume that $A=(0,0)$, $B=(1,0)$ and $D=(0,1)$. We may also assume that
$C=(c^2,1)$, for some $0<c<1$. Write $P=(x_0,y_0)$. Then straightforward calculations show that $Q=(x_1,y_1)$, where
$$
x_1=1-x_0-(1-c^2)y_0,\ \ y_1=\frac{(1-y_0)(x_0+(1-c^2)y_0)}{x_0}.
$$
But $(x_0,y_0)$ is on the right (resp. at, resp. on the left) of the line through $BC$ if and only if $x_0>1-y_0(1-c^2)$, (resp. $=$, resp. $<$). Thus $(x_0,y_0)$ is on the right (resp. at, resp. on the left) of the line through $BC$ if and only if $x_1<0$, (resp. $=$, resp. $>$), which proves the first item of the lemma. 

For the second item, assume $x_0=1-(1-c^2)y_0$. Then $x_1=0$ and 
$$
y_1=\frac{1-y_0}{1-y_0(1-c^2)}.
$$
Solving the quadratic equation $y_1=y_0$ we obtain 
$y_0=\frac{1\pm c}{1-c^2}$. To obtain $0<y_0<1$, we must choose the minus sign. We conclude that
$$
y_0=\frac{1}{1+c},
$$
which in fact belongs to the interval $(\frac{1}{2},1)$. 
\end{proof}

Now we are in the position of constructing the promised examples. 

\begin{example}\label{ex:MaximalCusps1}
Let $\gamma$ be a $2(n-1)$-half area odd-symmetric polygon and consider the trapezoid 
$\gamma(1)\gamma(n-1)\gamma(n)\gamma(2(n-1))$. By Lemma \ref{lemma:Trapezoidal}(3), there exist unique
$P$ in the segment $\gamma(2(n-1))\gamma(1)$ and $Q$ in the segment $\gamma(n-1)\gamma(n)$
such that $PQ$ is parallel to $\gamma(1)\gamma(n-1)$ and $PQ$ divides the considered trapezoid in equal areas. By the parallelism of central diagonals, one can show that the polygon 
$\gamma\cup\{P,Q\}$ is half-area. Moreover, all mid-points
of principal diagonals area cusps, except for $PQ$ (see Figure \ref{fig:MaximalCusps1}).
\end{example}

\begin{figure}[htb]
 \centering
 \includegraphics[width=0.80\linewidth]{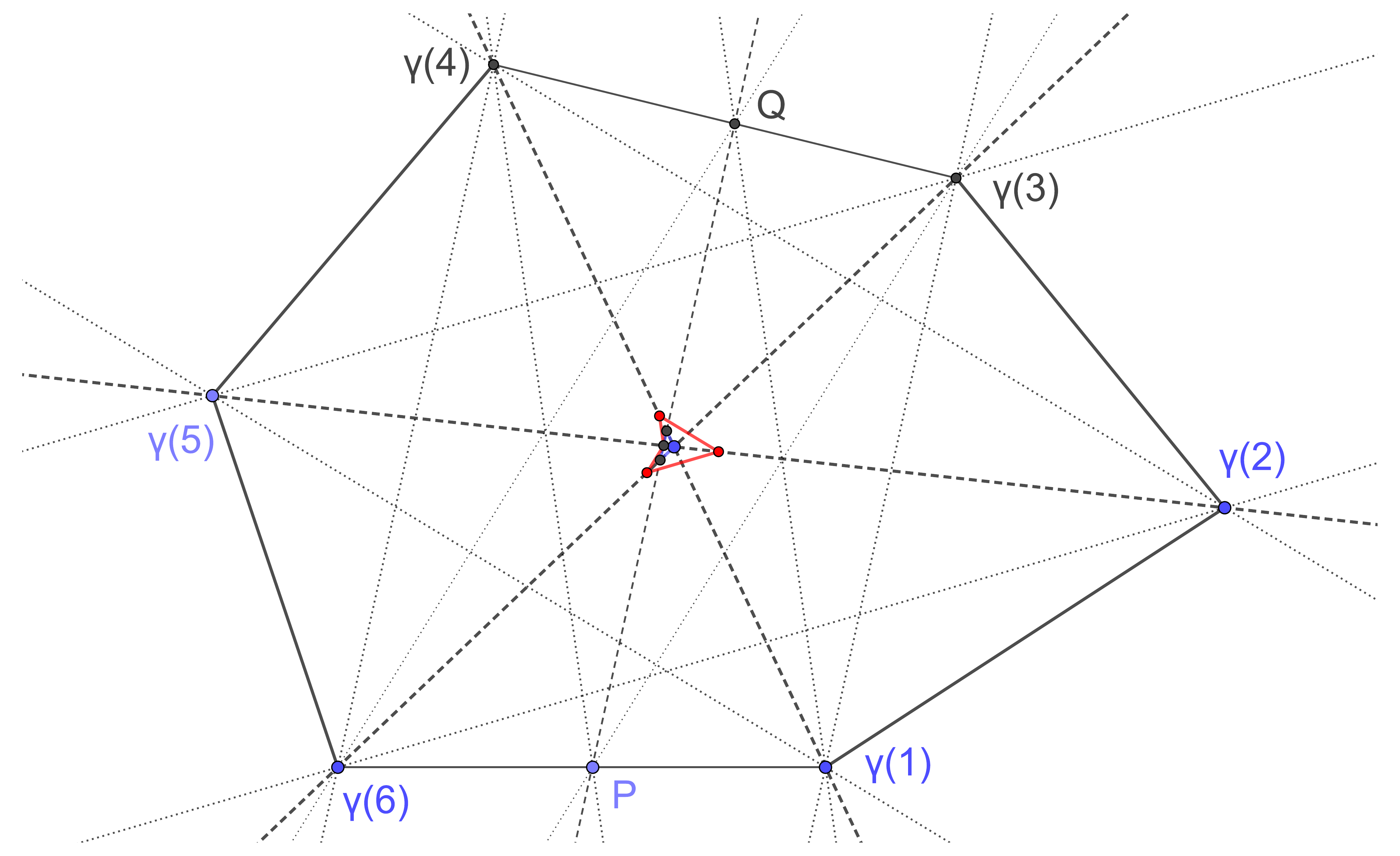}
 \caption{ The octagon with maximal number of cusps of Example \ref{ex:MaximalCusps1}.}
\label{fig:MaximalCusps1}
\end{figure}

The above example has not truly $2n$-sides, since $P$ belongs to $\gamma(1)\gamma(2n-2)$ and $Q$ belongs to $\gamma(n-1)\gamma(n)$. However, we can modify it slightly in order to obtain a convex 
half-area polygon without $3$ collinear vertices. 

\begin{example}\label{ex:MaximalCusps2}
Consider the notation of Example \ref{ex:MaximalCusps1}. We can choose $\bar{P}$ close to $P$ and outside $\gamma$
such that the corresponding $\bar{Q}$ is close to $Q$ and outside $\gamma$ (Lemma \ref{lemma:Trapezoidal}(1)). If 
$\bar{P}$ is sufficiently close to $P$, the polygon $\gamma\cup\{\bar{P},\bar{Q}\}$ is convex without collinear vertices. By the parallelism of central diagonals, the polygon 
$\gamma\cup\{P,Q\}$ is half-area. Moreover, if $\bar{P}$ is sufficiently close to $P$, all midpoints of principal diagonal are cusps, except for $\bar{P}\bar{Q}$
(see Figure \ref{fig:MaximalCusps2}). 
\end{example}

\begin{figure}[htb]
 \centering
 \includegraphics[width=0.80\linewidth]{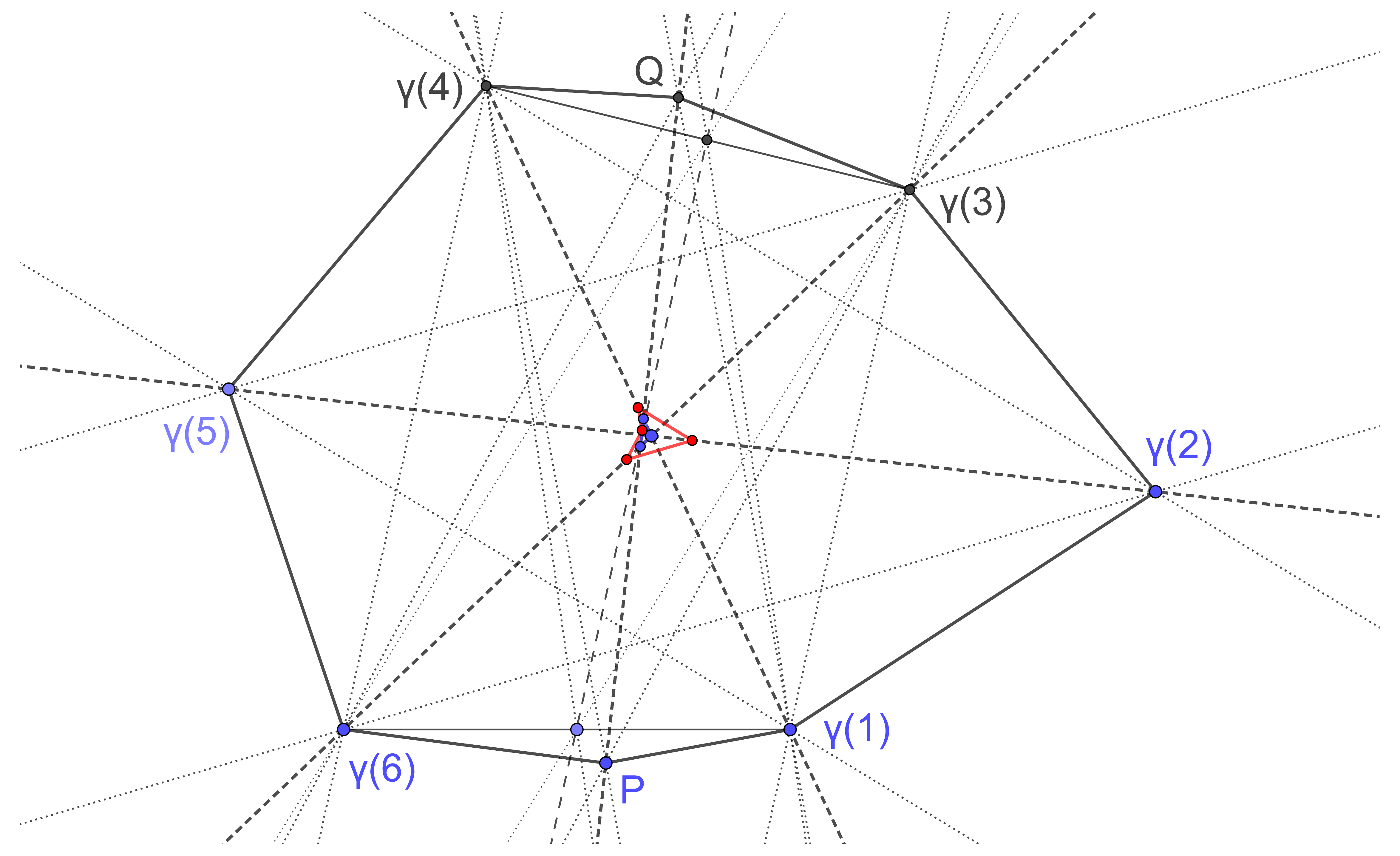}
 \caption{ The octagon with maximal number of cusps of Example \ref{ex:MaximalCusps2}.}
\label{fig:MaximalCusps2}
\end{figure}

\subsection{Three vertices theorem}

In this section we discuss the discrete counterpart of Theorem \ref{thm:ThreeVerticesSmooth}. 
Recall that $\gamma(i)$ is a half-area vertex of $\gamma$ if $\delta(i-\tfrac{1}{2})\cdot \delta(i-\tfrac{1}{2})<0$. Now the condition
$\delta(i+n+\tfrac{1}{2})=-\delta(i+\tfrac{1}{2})$ implies that the number of half-area vertices in the interval $[1,n]$ is odd. We shall verify now that in fact it should be at least three.

Let 
$$
g(i+\tfrac{1}{2})=\frac{\delta(i+\tfrac{1}{2})}{a(i+\tfrac{1}{2})}.
$$
Then $i$ is a half-area vertex if and only if $g(i-\tfrac{1}{2})\cdot g(i-\tfrac{1}{2})<0$. Since $g(i+n+\tfrac{1}{2})=-g(i+\tfrac{1}{2})$, 
we have that
\begin{equation}
\sum_{i=1}^{2n} g(i+\tfrac{1}{2})=0.
\end{equation}

\begin{lemma}\label{lemma:SumZero}
Denote
$$
\bar\gamma(i+\tfrac{1}{2})=\frac{1}{2}\left( \gamma(i)+\gamma(i+1)  \right).
$$
We have that
$$
\sum_{i=1}^{2n} g(i+\tfrac{1}{2})\bar\gamma(i+\tfrac{1}{2})=0.
$$
\end{lemma}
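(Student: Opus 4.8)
The plan is to imitate the proof of the smooth identity $\int_0^{2T_0}\alpha(s)\gamma(s)\,ds=0$, whose two ingredients were the half-period antisymmetry $\alpha(s+T_0)=-\alpha(s)$ and the structural relation $\alpha v = v'+2\gamma'$ (up to the constant $a$). Both have clean discrete counterparts here. First I would exploit the antisymmetry $g(i+n+\tfrac12)=-g(i+\tfrac12)$ to fold the full sum onto the first $n$ edges: splitting $\sum_{i=1}^{2n}$ into the blocks $i=1,\dots,n$ and $i=n+1,\dots,2n$, and reindexing the second block by $i\mapsto i+n$, gives
\begin{equation*}
\sum_{i=1}^{2n} g(i+\tfrac12)\bar\gamma(i+\tfrac12)=\sum_{i=1}^{n} g(i+\tfrac12)\left[\bar\gamma(i+\tfrac12)-\bar\gamma(i+n+\tfrac12)\right].
\end{equation*}
A direct computation from the definitions of $\bar\gamma$ and $v$ turns the bracket into $-\tfrac12\bigl(v(i)+v(i+1)\bigr)$, so the problem reduces to showing that $\sum_{i=1}^{n} g(i+\tfrac12)\bigl(v(i)+v(i+1)\bigr)=0$.

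The key step, and the analog of $\alpha v = v'+2\gamma'$, is the discrete identity
\begin{equation*}
g(i+\tfrac12)\bigl(v(i)+v(i+1)\bigr)=v'(i+\tfrac12)+2\gamma'(i+\tfrac12).
\end{equation*}
To obtain it I would add the two expressions for $v'(i+\tfrac12)$ provided by the basic-equations lemma: multiplying $v'=\tfrac{\delta}{a^+}v(i)-\tfrac{a}{a^+}\gamma'$ by $a^+$ and $v'=\tfrac{\delta}{a^-}v(i+1)-\tfrac{a}{a^-}\gamma'$ by $a^-$ yields $\delta\,v(i)=a^+v'+a\gamma'$ and $\delta\,v(i+1)=a^-v'+a\gamma'$; summing these and using $a=a^++a^-$ gives $\delta\,(v(i)+v(i+1))=a\,v'+2a\,\gamma'$, which is the claimed identity after dividing by $a$. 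This is the place where the specific half-area structure enters, so I expect it to be the only genuinely non-routine point.

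Finally I would telescope. Substituting the identity and summing over $i=1,\dots,n$,
\begin{equation*}
\sum_{i=1}^{n}\bigl(v'(i+\tfrac12)+2\gamma'(i+\tfrac12)\bigr)=\bigl(v(n+1)-v(1)\bigr)+2\bigl(\gamma(n+1)-\gamma(1)\bigr).
\end{equation*}
Since $v(1)=\gamma(n+1)-\gamma(1)$ and $v(n+1)=\gamma(1)-\gamma(n+1)$ (indices mod $2n$), the first term equals $-2(\gamma(n+1)-\gamma(1))$ and cancels the second, so the right-hand side is $0$. Carrying back the factor $-\tfrac12$ from the folding step then proves the lemma. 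The antiperiodicity $v(i+n)=-v(i)$ used at the end is precisely the discrete shadow of $v(s+T_0)=-v(s)$ in the smooth argument, which is why the same telescoping cancellation occurs.
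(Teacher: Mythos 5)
Your proposal is correct and follows essentially the same route as the paper: fold the sum over $2n$ edges onto the first $n$ via the antisymmetry of $g$, combine the two expansions of $v'(i+\tfrac12)$ from the basic-equations lemma into $g(i+\tfrac12)\bigl(v(i)+v(i+1)\bigr)=v'(i+\tfrac12)+2\gamma'(i+\tfrac12)$, and telescope. The only cosmetic difference is that the paper rewrites the right-hand side as $\gamma'(i+\tfrac12)+\gamma'(i+n+\tfrac12)$ before telescoping, whereas you telescope $v'+2\gamma'$ directly and invoke $v(n+1)=-v(1)$; the cancellation is identical.
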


\begin{proof}
We have that
$$
\sum_{i=1}^{2n} g(i+\tfrac{1}{2})\bar\gamma(i+\tfrac{1}{2})=-\sum_{i=1}^{n} g(i+\tfrac{1}{2})\bar{v}(i+\tfrac{1}{2}).
$$
We can write
$$
g(i+\tfrac{1}{2})v(i)-\gamma'(i+\tfrac{1}{2})=\frac{a^+(i+\tfrac{1}{2})}{a(i+\tfrac{1}{2})}v'(i+\tfrac{1}{2}),
$$
$$
g(i+\tfrac{1}{2})v(i+1)-\gamma'(i+\tfrac{1}{2})=\frac{a^-(i+\tfrac{1}{2})}{a(i+\tfrac{1}{2})}v'(i+\tfrac{1}{2}).
$$
Summing we obtain
$$
g(i+\tfrac{1}{2})\bar{v}(i+\tfrac{1}{2})-\gamma'(i+\tfrac{1}{2})=\frac{1}{2}v'(i+\tfrac{1}{2}).
$$
Thus 
$$
g(i+\tfrac{1}{2})\bar{v}(i+\tfrac{1}{2})=\frac{1}{2}\left(\gamma'(i+\tfrac{1}{2})+\gamma'(i+n+\tfrac{1}{2})\right).
$$
$$
\sum_{i=1}^{n} g(i+\tfrac{1}{2})\bar{v}(i+\tfrac{1}{2})=\frac{1}{2}\left(\gamma(n+1)-\gamma(1)+\gamma(1)-\gamma(n+1)  \right)=0,
$$
thus proving the lemma.
\end{proof}

\begin{thm}
The number of half-area vertices in the interval $[0,T_0]$ is odd and at least $3$.
\end{thm}

\begin{proof}
Assume by contradiction that there is only one half-area vertex $i_0$ in the interval $[1,n]$. Write the mid-area line passing through $\gamma(i_0)=(x_0,y_0)$ as $A(x-x_0)+B(y-y_0)=0$. We may assume that $\delta>0$ in the region $A(x-x_0)+B(y-y_0)>0$ (or else change the signs of $A$ and $B$). Then 
$$
\sum_{i_0}^{i_0+n}\delta (Ag_1+Bg_2)>0, \ \ \sum_{i_0+n}^{i_0+2n}\alpha (Ag_1+Bg_2)>0,
$$
where $g=(g_1,g_2)$. But these contradicts Lemma \ref{lemma:SumZero}.
\end{proof}

\section{Inverse Construction}

Consider the polygon $\bar{\gamma}$ defined by
\begin{equation}\label{eq:involute}
\bar\gamma(i)= (1-c(i)) \gamma(i)+ c(i) \gamma(i+n),
\end{equation}
where $c(i)$ depends on $i$.

\begin{lemma}
We have that:
\begin{enumerate}
\item
$$
\bar{v}(i)=(1-2c(i))v(i).
$$
\item
$$
\bar\gamma'(i+\tfrac{1}{2})=(1-c(i))\gamma'(i+\tfrac{1}{2})+c(i)\gamma'(i+n+\tfrac{1}{2})+c'(i+\tfrac{1}{2})v(i+1),
$$
or equivalently,
$$
\bar\gamma'(i+\tfrac{1}{2})=(1-c(i+1))\gamma'(i+\tfrac{1}{2})+c(i+1)\gamma'(i+n+\tfrac{1}{2})+c'(i+\tfrac{1}{2})v(i).
$$
\item
$$
\bar{a}^-(i+\tfrac{1}{2})=(1-2c(i+1))\left( a^-(i+\tfrac{1}{2})-c(i)a(i+\tfrac{1}{2})  \right)
$$
and
$$
\bar{a}^+(i+\tfrac{1}{2})=(1-2c(i))\left(a^+(i+\tfrac{1}{2})-c(i+1)a(i+\tfrac{1}{2})\right).
$$
\end{enumerate}
\end{lemma}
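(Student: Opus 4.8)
The plan is to verify the three items in order by direct substitution into the definitions \eqref{definev} and \eqref{defineapm}, since item (3) will rely on items (1) and (2). Throughout I assume, as is forced by item (1) and as is natural for the construction \eqref{eq:involute} to preserve $\M$, that $c$ is $n$-periodic, i.e. $c(i+n)=c(i)$; together with $\gamma(i+2n)=\gamma(i)$ this makes item (1) immediate. Indeed, writing $\bar v(i)=\bar\gamma(i+n)-\bar\gamma(i)$ and expanding both ends via \eqref{eq:involute}, the relations $c(i+n)=c(i)$ and $\gamma(i+2n)=\gamma(i)$ collapse the four resulting terms into $(1-2c(i))(\gamma(i+n)-\gamma(i))=(1-2c(i))v(i)$.

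For item (2), I would compute $\bar\gamma'(i+\tfrac12)=\bar\gamma(i+1)-\bar\gamma(i)$ directly and regroup the four vectors $\gamma(i),\gamma(i+1),\gamma(i+n),\gamma(i+n+1)$ so that they assemble into $\gamma'(i+\tfrac12)$, $\gamma'(i+n+\tfrac12)$ and one of $v(i),v(i+1)$, using $c'(i+\tfrac12)=c(i+1)-c(i)$ to absorb the mismatch between the coefficients $c(i)$ and $c(i+1)$. Collecting the residual multiple of $c'$ onto $v(i+1)$ produces the first displayed form, and onto $v(i)$ produces the second. The two forms are then literally equal, because their difference is $c'(i+\tfrac12)\bigl(\gamma'(i+\tfrac12)-\gamma'(i+n+\tfrac12)+v(i+1)-v(i)\bigr)$, which vanishes by the identity $v(i+1)-v(i)=\gamma'(i+n+\tfrac12)-\gamma'(i+\tfrac12)$.

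The main work is item (3), and the key device is to pair each form of $\bar\gamma'$ from item (2) with the $\bar v$ from item (1) that annihilates the $c'$-term. Concretely, $\bar a^-(i+\tfrac12)=[\bar\gamma'(i+\tfrac12),\bar v(i+1)]=(1-2c(i+1))[\bar\gamma'(i+\tfrac12),v(i+1)]$, into which I insert the \emph{first} form of $\bar\gamma'$, so that the $c'$-term drops out by $[v(i+1),v(i+1)]=0$. Symmetrically, $\bar a^+(i+\tfrac12)=(1-2c(i))[\bar\gamma'(i+\tfrac12),v(i)]$ calls for the \emph{second} form, killing the $c'$-term via $[v(i),v(i)]=0$. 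What then remains is to evaluate the two cross determinants $[\gamma'(i+n+\tfrac12),v(i+1)]$ and $[\gamma'(i+n+\tfrac12),v(i)]$.

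Here lies the only genuine obstacle: the sign and index bookkeeping in those cross determinants, where the half-area symmetry is essential. Using $v(i+n)=-v(i)$ and $v(i+n+1)=-v(i+1)$ together with the characterizing relations $a^+(i+\tfrac12)=a^-(i+n+\tfrac12)$ and $a^-(i+\tfrac12)=a^+(i+n+\tfrac12)$, one finds $[\gamma'(i+n+\tfrac12),v(i+1)]=-a^-(i+n+\tfrac12)=-a^+(i+\tfrac12)$ and $[\gamma'(i+n+\tfrac12),v(i)]=-a^+(i+n+\tfrac12)=-a^-(i+\tfrac12)$. Substituting and using $a=a^++a^-$ from \eqref{definea} gives $[\bar\gamma'(i+\tfrac12),v(i+1)]=(1-c(i))a^-(i+\tfrac12)-c(i)a^+(i+\tfrac12)=a^-(i+\tfrac12)-c(i)a(i+\tfrac12)$ and, likewise, $[\bar\gamma'(i+\tfrac12),v(i)]=a^+(i+\tfrac12)-c(i+1)a(i+\tfrac12)$. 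Multiplying by the scalars $(1-2c(i+1))$ and $(1-2c(i))$ from item (1) yields the two claimed formulas, completing the proof.
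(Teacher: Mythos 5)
Your proof is correct and is exactly the ``straightforward calculations'' that the paper leaves to the reader: direct expansion of \eqref{eq:involute}, with the efficient device of pairing each form of $\bar\gamma'$ from item (2) with the copy of $\bar v$ that kills the $c'$-term, and the half-area identities $a^{\pm}(i+\tfrac{1}{2})=a^{\mp}(i+n+\tfrac{1}{2})$ together with $v(i+n)=-v(i)$ handling the cross determinants. You are also right that item (1) as stated forces the implicit hypothesis $c(i+n)=c(i)$, which the paper never makes explicit; flagging it is a useful clarification, not a gap.
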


\begin{proof}
Straightforward calculations.
\end{proof}

The following proposition is a discrete counterpart of Proposition \ref{prop:RecoveringSmooth}:

\begin{Proposition}
Assume that no opposite sides are parallel. 
If $\E(\bar\gamma)=\E(\gamma)$ and $\M(\bar\gamma)=\M(\gamma)$, then $\bar\gamma$ has the form
\eqref{eq:involute} with $c$ constant.  Conversely, any polygon $\bar\gamma$ of the form \eqref{eq:involute} with $c$ constant
has the same $\E$ and $\M$ envelopes as $\gamma$.
\end{Proposition}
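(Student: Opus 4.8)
The plan is to prove the two implications separately, with the displayed formulas of the preceding lemma as the main computational tool. I would dispatch the converse first, since it is quickest. Assume $c(i)\equiv c$ is constant. Then $\bar\gamma(i)=\gamma(i)+c\,v(i)$ lies on the principal–diagonal line $l(i)$ of $\gamma$, while $\bar v(i)=(1-2c)v(i)$ is parallel to $v(i)$; hence the principal diagonal of $\bar\gamma$ through $\bar\gamma(i)$ and $\bar\gamma(i+n)$ is exactly the line $l(i)$. Intersecting consecutive diagonals then gives $\E(\bar\gamma)=\E(\gamma)$ at once. For the midpoints, using $v(i+n)=-v(i)$ one gets $\M(\bar\gamma)(i)=\M(\gamma)(i)+\tfrac12\bigl(c(i)-c(i+n)\bigr)v(i)=\M(\gamma)(i)$. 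Finally, to confirm $\bar\gamma$ is genuinely half-area I would verify $\bar a^+(i+\tfrac12)=\bar a^-(i+n+\tfrac12)$ directly from the formula for $\bar a^\pm$, using the half-area identities $a^-(i+n+\tfrac12)=a^+(i+\tfrac12)$ and $a(i+n+\tfrac12)=a(i+\tfrac12)$.

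For the forward implication I would work in two stages: first recover the form \eqref{eq:involute}, then force $c$ constant. For the first stage, $\E(\bar\gamma)=\E(\gamma)$ means the principal diagonal $\bar l(i)$ of $\bar\gamma$ passes through the two points $\E(\gamma)(i-\tfrac12)$ and $\E(\gamma)(i+\tfrac12)$, and $\M(\bar\gamma)=\M(\gamma)$ means it also passes through $\M(\gamma)(i)$; all three of these points lie on $l(i)$. As soon as two of them are distinct we conclude $\bar l(i)=l(i)$, so $\bar\gamma(i)\in l(i)$ and $\bar\gamma(i)=(1-c(i))\gamma(i)+c(i)\gamma(i+n)$ for some scalar $c(i)$, which is precisely \eqref{eq:involute}.

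For the second stage I would substitute the lemma's expressions for $\bar a^{\pm}$ into the defining formula \eqref{eq:DefineE} for $\E(\bar\gamma)$. Writing $p=c(i)$, $q=c(i+1)$, $\mu=a^-/a$ and $\bar\mu=\bar a^-/\bar a$, the identity $\E(\bar\gamma)(i+\tfrac12)=\gamma(i)+[\,p+\bar\mu(1-2p)\,]v(i)$ together with $\E(\gamma)(i+\tfrac12)=\gamma(i)+\mu\,v(i)$ reduces $\E(\bar\gamma)=\E(\gamma)$ to the single scalar equation $\bar a(i+\tfrac12)=a(i+\tfrac12)(1-2p)(1-2q)$. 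Expanding $\bar a=\bar a^++\bar a^-$ and simplifying, every term cancels except $p(a-2a^+)+q(a-2a^-)=0$; since the half-area relation $a^--a^+=\delta$ gives $a-2a^+=\delta$ and $a-2a^-=-\delta$, this is exactly $(c(i)-c(i+1))\,\delta(i+\tfrac12)=0$. Because no opposite sides are parallel, $\delta(i+\tfrac12)\neq0$ for every $i$, so $c(i)=c(i+1)$ for all $i$ and $c$ is constant.

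The main obstacle is the first stage of the forward direction: guaranteeing that the three collinear points $\M(\gamma)(i)$, $\E(\gamma)(i-\tfrac12)$, $\E(\gamma)(i+\tfrac12)$ are not all equal, so that $\bar l(i)=l(i)$ may be concluded. When $\E$ degenerates to a point — the skew-symmetric situation — the envelope condition alone is too weak, and this is exactly where the hypothesis $\M(\bar\gamma)=\M(\gamma)$ earns its keep, supplying the third point on $l(i)$. The algebra of the second stage is otherwise routine once one spots the cancellation isolating the factor $\delta(i+\tfrac12)$, and the assumption of no parallel opposite sides is used precisely there, to pass from $(c(i)-c(i+1))\delta(i+\tfrac12)=0$ to $c(i)=c(i+1)$.
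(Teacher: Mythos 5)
Your proof is correct and follows essentially the same route as the paper: recover the form \eqref{eq:involute} from the coincidence of the lines $l(i)$ and the midpoints $\M(i)$ (your observation that $\M(\gamma)=\M(\bar\gamma)$ is what saves the degenerate skew-symmetric case is exactly the right point, and under the no-parallel-sides hypothesis $\M(i)\neq\E(i+\tfrac{1}{2})$, so two distinct points on $l(i)$ are always available), then reduce everything to the scalar condition $(c(i)-c(i+1))\,\delta(i+\tfrac{1}{2})=0$ and conclude $c$ is constant. One small point of framing: once $\bar{l}(i)=l(i)$ for all $i$, the equality $\E(\bar\gamma)=\E(\gamma)$ is automatic, so the scalar equation you extract from \eqref{eq:DefineE} is really the condition that $l(i)$ bisects $\bar\gamma$ (equivalently $\bar{a}^+(i+\tfrac{1}{2})=\bar{a}^-(i+n+\tfrac{1}{2})$, i.e.\ that $\bar\gamma$ is half-area), which is precisely how the paper phrases it and which leads to the same cancellation isolating the factor $\delta(i+\tfrac{1}{2})$.
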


\begin{proof}
Assuming $\E(\bar\gamma)=\E(\gamma)$ and $\M(\bar\gamma)=\M(\gamma)$, the line $l(i)$ and the midpoint $\M(i)$ are the same for $\gamma$ and $\bar\gamma$, which implies that $\bar\gamma$ has the form \eqref{eq:involute}. Now we have to check that $l(i)$ is also a mid-area line for $\bar\gamma$, which is equivalent to 
\begin{equation}
\bar{a}^+(i+\tfrac{1}{2})=\bar{a}^-(i+n+\tfrac{1}{2}).
\end{equation}
for any $1\leq i\leq 2n$. This condition is equivalent to
$$
\left( 2a^+(i+\tfrac{1}{2})-a(i+\tfrac{1}{2}) \right)c'(i+\tfrac{1}{2})\neq 0.
$$
Under the hypothesis that the opposite sides ${\gamma(i)\gamma(i+1)}$ and ${\gamma(i)\gamma(i+1)}$
are not parallel, this condition is equivalent to $c(i)=c(i+1)$. Since this holds for any $i$, we conclude that $c$ is constant.
For the converse, just follow the steps backwards. 
\end{proof}

Thus the polygons with the same sets $\E$ and $\M$ as $\gamma$ form a one parameter family given by
\begin{equation*}
\gamma_c(i)= (1-c) \gamma(i)+ c \gamma(i+n),
\end{equation*}
where $c\in\mathbb{R}$. However, we should point out that $\H(\gamma_c)\neq \H(\gamma)$. In fact, the hyperbola containing the arc $\H(\gamma)(i+\tfrac{1}{2})$ is 
asymptotic to the sides $\gamma(i+\tfrac{1}{2})$ and $\gamma(i+n+\tfrac{1}{2})$ of $\gamma$, while the hyperbola containing the arc $\H(\gamma_c)(i+\tfrac{1}{2})$ is 
asymptotic to the correponding sides of $\gamma_c$. %Consider, for example, the case where $\gamma$ is a hexagon, and consider the auxiliary hexagon $\gamma_c$, for some $c>0$. 
%Note in the figures below that $\H(\gamma)\neq\H(\gamma_c)$. 

\begin{figure}[ht]
	\centering
	\includegraphics[width=0.80\linewidth]{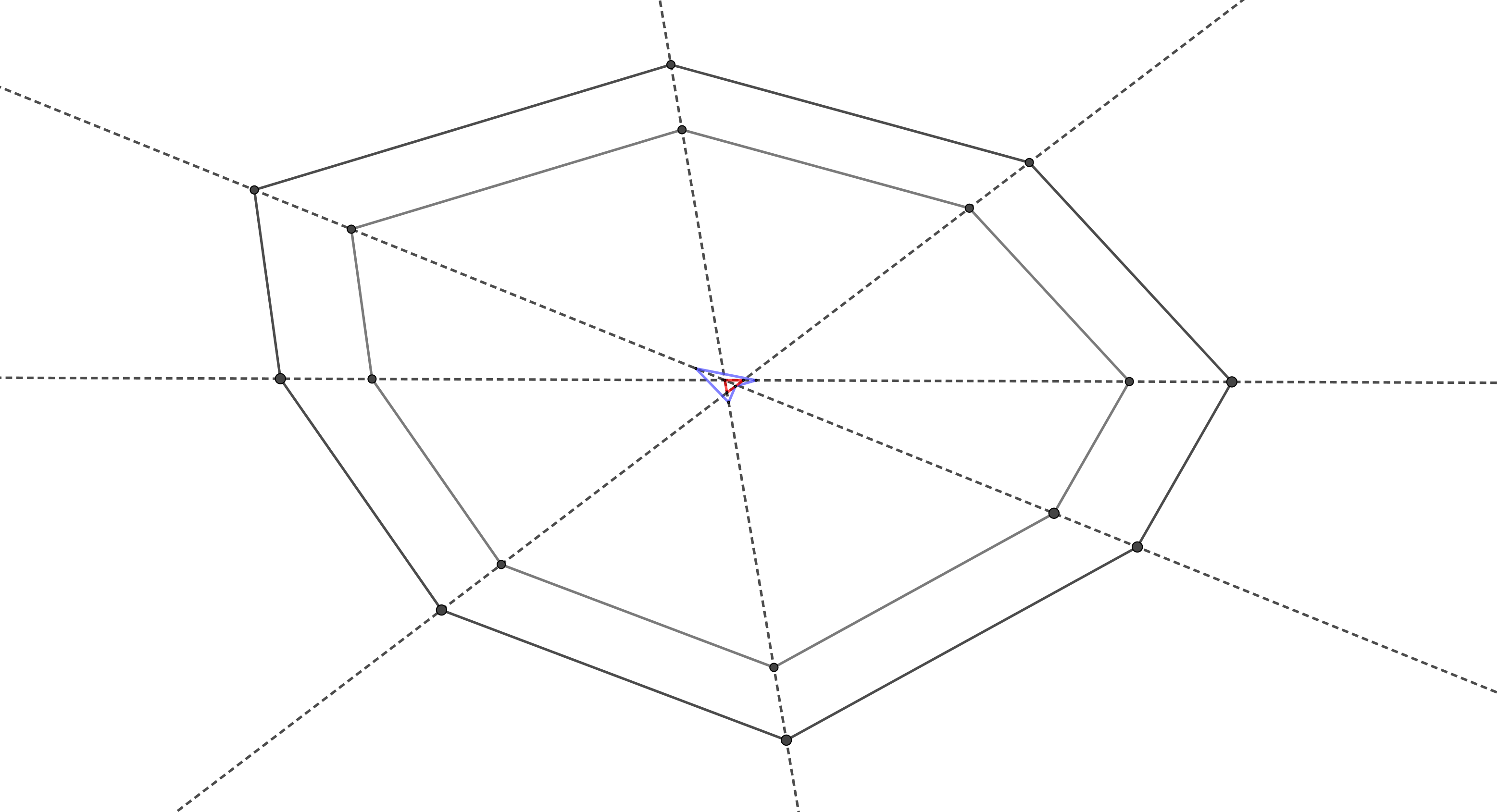}
	\caption{The octagon $\gamma$ (outer) together with the octagon $\gamma_c$ (inner), for $c=0.1$. The envelopes $\E$ and $\M$ are the same for both, but $\H$ is different.}
	\label{fig:test1}
\end{figure}
%\begin{figure}[H]		
%	\centering
%	\includegraphics[width=0.80\linewidth]{hips6.png}
%	\caption{The different arcs $\H(\gamma)$ and $\H(\gamma_c)$.}
%	\label{fig:test2}
%\end{figure}

In fact, we have just proved that if $\H(\bar\gamma)=\H(\gamma)$ and $\gamma$ has no opposite parallel sides, 
then $\bar\gamma=\gamma$, thus recovering the result of \cite{Noah}.

%\bigskip\bigskip

%\paragraph{Statements and Declarations}

%\paragraph{Funding} The author was partially supported by CNPq and CAPES (Finance Code 001).

%\paragraph{Competing Interests} The author has no relevant financial or non-financial interests to disclose.

%\paragraph{Data Availability} Data sharing not applicable to this article as no datasets were generated or analysed during the current study.

% ------------------------------------------------------------------------
\end{document}